\theoremstyle{plain}
\newtheorem{teo}{Theorem}[section]
\newtheorem{coro}[teo]{Corollary}
\newtheorem{pro}[teo]{Proposition}
\theoremstyle{defi}
\newtheorem{defi}[teo]{Definition}
\theoremstyle{remark}
\newcommand{\diag}{\operatorname{diag}}
\newcommand{\C}{\mathbb{C}}
\newcommand{\N}{\mathbb{N}}
\newcommand{\prodint}[1]{\left\langle{#1}\right\rangle}
\DeclareRobustCommand{\gaussk}{\DOTSB\gaussk@\slimits@}
\newcommand{\gaussk@}{\mathop{\vphantom{\sum}\mathpalette\bigcal@{K}}}
\newcommand{\bigcal@}[2]{%
	\vcenter{\m@th
		\sbox\z@{$#1\sum$}%
		\dimen@=\dimexpr\ht\z@+\dp\z@
		\hbox{\resizebox{!}{0.8\dimen@}{$\mathcal{K}$}}%
	}%
}
\newcommand{\cfracplus}{\mathbin{\cfracplus@}}
\newcommand{\cfracplus@}{%
	\sbox\z@{$\dfrac{1}{1}$}%
	\sbox\tw@{$+$}%
	\raisebox{\dimexpr\dp\tw@-\dp\z@\relax}{$+$}%
}
\newcommand{\cfracdots}{\mathord{\cfracdots@}}
\newcommand{\cfracdots@}{%
	\sbox\z@{$\dfrac{1}{1}$}%
	\sbox\tw@{$+$}%
	\raisebox{\dimexpr\dp\tw@-\dp\z@\relax}{$\cdots$}%
}
\newcommand*{\relrelbarsep}{.386ex}
\newcommand*{\relrelbar}{%
	\mathrel{%
		\mathpalette\@relrelbar\relrelbarsep
	}%
}
\newcommand*{\@relrelbar}[2]{%
	\raise#2\hbox to 0pt{$\m@th#1\relbar$\hss}%
	\lower#2\hbox{$\m@th#1\relbar$}%
}
\providecommand*{\rightrightarrowsfill@}{%
	\arrowfill@\relrelbar\relrelbar\rightrightarrows
}
\providecommand*{\leftleftarrowsfill@}{%
	\arrowfill@\leftleftarrows\relrelbar\relrelbar
}
\providecommand*{\xrightrightarrows}[2][]{%
	\ext@arrow 0359\rightrightarrowsfill@{#1}{#2}%
}
\providecommand*{\xleftleftarrows}[2][]{%
	\ext@arrow 3095\leftleftarrowsfill@{#1}{#2}%
}
\begin{document}

 \title[Uvarov Perturbations for Matrix Orthogonal Polynomials]{Uvarov Perturbations for Matrix Orthogonal Polynomials}

 \author[G Ariznabarreta]{Gerardo Ariznabarreta}
 %\email{$^1$zarikuekano@yahoo.es}

\author[JC García-Ardila]{Juan C. García-Ardila$^{1,\maltese}$}
\address{$^1$Departamento de Matemática Aplicada a la Ingeniería Industrial, Universidad Politécnica de Madrid, E.T.S. Ingenieros Industriales, José Gutiérrez Abascal 2, 28006 Madrid, Spain}
\email{juancarlos.garciaa@upm.es}

\author[M Mañas]{Manuel Mañas$^{2,\dagger}$}
 \address{$^2$Departamento de Física Teórica, Universidad Complutense de Madrid, Ciudad Universitaria, Plaza de Ciencias 1,  28040 Madrid, Spain}
 \email{manuel.manas@ucm.es}

\author[F Marcellán]{Francisco Marcellán$^{3,\dagger}$}
\address{$^3$Departamento de Matemáticas, Universidad Carlos III de Madrid, Avd/  Universidad 30, 28911 Leganés, Spain}
\email{pacomarc@ing.uc3m.es}

\thanks{$^\maltese$Acknowledges financial support from Comunidad de Madrid multiannual agreement with the Universidad Rey Juan Carlos under the grant Proyectos I+D para Jóvenes Doctores, Ref. M2731, project NETA-MM}
	\thanks{$^\dagger$Acknowledges financial support from Spanish ``Agencia Estatal de Investigación'' research project [PID2021- 122154NB-I00], \emph{Ortogonalidad y Aproximación con Aplicaciones en Machine Learning y Teoría de la Probabilidad}.}

\keywords{Matrix biorthogonal polynomials, spectral theory of matrix polynomials,  quasidefinite matrix of  generalized kernels, Gauss--Borel factorization,  matrix Uvarov transformations, Christoffel type formulas}
\subjclass{42C05,15A23}
\enlargethispage{.13cm}
\begin{abstract}
Additive perturbations, specifically, matrix Uvarov transformations for matrix orthogonal polynomials, are under consideration. Christoffel--Uvarov formulas are deduced for the perturbed biorthogonal families, along with their matrix norms. These formulations are expressed in terms of the spectral jets of the Christoffel--Darboux kernels. %The applications to discrete Sobolev matrix orthogonal polynomials are also presented.
	\end{abstract}

\maketitle
%\thispagestyle{empty}

%\newpage
%\newgeometry{total={19cm,25cm},top=1.35cm, left=1.2cm}
%\setcounter{tocdepth}{2}
%\tableofcontents

%\setlength{\parindent}{0pt}

\section{Introduction}
In this paper, we extend the analysis previously conducted on Christoffel and Geronimus perturbations, as well as their combinations explored in \cite{nuestro0,nuestro1,nuestro2}, by examining the case of additive perturbations to a given matrix of measures and the corresponding matrix orthogonal polynomials.

The polynomial ring $\mathbb C^{p\times p}[x]$ serves as a free bimodule over the ring of matrices $\mathbb C^{p\times p}$ with the canonical basis $\{I_p, I_p x, I_p x^2, \dots\}$, where $I_p\in\C^{p\times p}$ is the identity matrix. Noteworthy free bisubmodules include the sets $\mathbb C_m^{p\times p}[x]$ of matrix polynomials of degree less than or equal to $m$. A basis for $\mathbb C_m^{p\times p}[x]$, with a cardinality of $m+1$, is given by $\{I_p, I_p x, \dots, I_p x^m\}$. Since $\mathbb C$ possesses the invariant basis number (IBN) property, so does $\mathbb C^{p\times p}$ (see \cite{rowen}). Consequently, as $\mathbb C^{p\times p}$ is an IBN ring, the rank of the free module $\mathbb C_m^{p\times p}[x]$ is unique and equals $m+1.$ In other words, any other basis has the same cardinality. The algebraic dual of $\big(\mathbb C_m^{p\times p}[x]\big)^*$ consists of homomorphisms $\phi :\mathbb C_m^{p\times p}[x]\rightarrow \mathbb C^{p\times p}$ that, for the right module, take the form
\begin{align*}
\langle \phi,P(x)\rangle&=\phi_0 p_0+\dots+\phi_m p_m,  & P(x)&=p_0+\dots +p_mx^m,
\end{align*}
where $\phi_k\in\mathbb C^{p\times p}$.
Hence, we can establish an identification between the dual of the right module and the corresponding left submodule. This dual module is a free module with a singular rank, precisely $m+1$, and it possesses a dual basis denoted by $\{(I_p x^k)^*\}_{k=0}^m$. This basis satisfies
$
\langle(I_px^k)^*,I_p x^l\rangle=\delta_{k,l}I_p.
$
Similar statements hold for the left module $\mathbb C_m^{p\times p}[x]$.

 \begin{defi}[Sesquilinear form]\label{def:sesquilinear}
A sesquilinear form $\prodint{\cdot,\cdot}$ defined on the bimodule $\mathbb{C}^{p\times p}[x]$ is a continuous mapping given by
\begin{align*}
	\begin{array}{cccc}
		\prodint{\cdot,\cdot}: &\mathbb{C}^{p\times p}[x]\times\mathbb{C}^{p\times p}[x]&\longrightarrow &\mathbb{C}^{p\times p},\\
		&(P(x), Q(x))&\mapsto& \prodint{P(x),Q(y)}.
	\end{array}
\end{align*}
Here, $\prodint{\cdot,\cdot}$ has the following properties for any triple $P(x),Q(x),R(x)\in  \mathbb{C}^{p\times p}[x]$, with $A,B\in\mathbb{C}^{p\times p}$.
\begin{enumerate}
	\item  $\prodint{AP(x)+BQ(x),R(y)}=A\prodint{P(x),R(y)}+B\prodint{Q(x),R(y)}$,
	\item $\prodint{P(x),AQ(y)+BR(y)}=\prodint{P(x),Q(y)}A^\top+\prodint{P(x),R(y)}B^\top$.
\end{enumerate}
\end{defi}
The reader may have observed that, despite dealing with complex polynomials in a real variable, we have chosen the transpose instead of the Hermitian conjugate. For any pair of matrix polynomials, the sesquilinear form is defined as follows
\begin{align*}
\prodint{P(x),Q(y)}&=\sum_{\substack{k=1,\dots,\deg P\\
		l=1,\dots,\deg Q}}p_k G_{k,l}(q_l)^\top, &P(x)&=\sum\limits_{k=0}^{\deg P}p_kx^k, &Q(x)&=\sum\limits_{l=0}^{\deg Q} q_lx^l,
\end{align*}
where the coefficients are $G_{k,l}=\prodint{I_px^k ,I_py^l }$.
The corresponding semi-infinite matrix
\begin{align*}
	G=\left[\begin{NiceMatrix}
		G_{0,0 } &G_{0,1}& \Cdots\\
		G_{1,0} & G_{1,1} & \Cdots\\
		\Vdots[shorten-end=2pt] & \Vdots[shorten-end=2pt] & 
	\end{NiceMatrix}\right]
\end{align*}
is known in the literature as the Gram matrix of the sesquilinear form.

\begin{defi}
A matrix of generalized kernels
\begin{align*}
	u_{x,y}:=\begin{bmatrix}
		(u_{x,y})_{1,1}& \Cdots &(u_{x,y})_{1,p}\\
		\vdots & & \vdots\\
		(u_{x,y})_{p,1} & \Cdots & (u_{x,y})_{p,p}
	\end{bmatrix},
\end{align*}
where $(u_{x,y})_{k,l}\in(\mathbb C[x,y])'$,  provides a continuous sesquilinear form with entries given by
\begin{align*}
	\big(\langle P(x),Q(y)\rangle_u\big)_{i,j}&=\sum_{k,l=1}^p\big\langle(u_{x,y})_{k,l}, P_{i,k}(x)\otimes Q_{j,l}(y)\big\rangle\\
	&=\sum_{k,l=1}^p\big\langle \mathcal L_{u_{k,l}}(Q_{j,l}(y)),P_{i,k}(x) \big\rangle,
\end{align*}
where $\mathcal L_{u_{k,l}}:\mathbb C[y]\to(\mathbb C[x])'$ is a continuous linear operator. This can be condensed into a matrix form. For $u_{x,y}\in (\mathbb C^{p\times p}[x,y])'=(\mathbb C^{p\times p}[x,y])^*\cong \mathbb C^{p \times p}[\![x,y]\!]$, a sesquilinear form is given by
\begin{align*}
	\langle P(x), Q(y) \rangle_u&=\langle u_{x,y} , P(x)\otimes Q(y)\rangle=\langle\mathcal L_u(Q(y)),P(x)\rangle,
\end{align*}
where $\mathcal L_u: \mathbb C^{p\times p}[y]\to(\mathbb C^{p\times p}[x])'$ is a continuous linear map.
%Or, in other scenarios   $\mathcal L_u:( (\mathcal  E)_y)^{p\times p}\to((\mathcal E')_x)^{p\times p}$ or
%$\mathcal L_u: (( \mathcal O_M)_y)^{p\times p}\to((\mathcal O_c')_x)^{p\times p}$.
\end{defi}

In the sequel $N_0\coloneq \{0,1,2,\dots\}$.

\begin{defi}[Biorthogonal matrix polynomials]
Given a sesquilinear form $\prodint{\cdot,\cdot}$, two sequences of matrix polynomials $\big\{P_n^{[1]}(x)\big\}_{n=0}^\infty$ and $\big\{P_n^{[2]}(x)\big\}_{n=0}^\infty$ are said to be biorthogonal with respect to $\prodint{\cdot,\cdot}$ if:
\begin{enumerate}
	\item $\deg(P_n^{[1]}(x))=\deg(P_n^{[2]}(x))=n$ for  $n\in\N_0$,
	\item $\prodint{P_n^{[1]}(x),P_m^{[2]}(y)}=\delta_{n,m}H_n$ for  $n,m\in\N_0$,
\end{enumerate}
where $H_n$ are nonsingular matrices, and $\delta_{n,m}$ is the Kronecker delta.
\end{defi}

\begin{defi}[Quasidefiniteness]
	A Gram matrix of a sesquilinear form $\langle\cdot,\cdot\rangle_u$ is said to be quasidefinite if $\det G_{[k]}\neq 0$ for all $k\in\{0,1,\dots\}$.
	Here, $G_{[k]}$ denotes the truncation
	\begin{align*}
		G_{[k]}:=\begin{bmatrix}
			G_{0,0}&\Cdots & G_{0,k-1}\\
			\vdots & & \vdots\\
			G_{k-1,0} & \Cdots &G_{k-1,k-1}
		\end{bmatrix}.
	\end{align*}
	We say that the bivariate generalized function $u_{x,y}$ is quasidefinite and the corresponding sesquilinear form is nondegenerate when its Gram matrix is quasidefinite.
	\end{defi}	

\begin{pro}[Gauss--Borel factorization, see \cite{ari}]\label{pro:fac}
If the Gram matrix of a sesquilinear form $\langle\cdot,\cdot\rangle_u$ is quasidefinite, then a unique Gauss--Borel factorization exists, and it is given by
\begin{align}\label{eq:gauss}
	G=(S_1)^{-1} H (S_2)^{-\top},
\end{align}
where $S_1,S_2$ are lower unitriangular block matrices, and $H$ is a diagonal block matrix
\begin{align*}
	S_i&=\left[\begin{NiceMatrix}
		I_p&0_p&0_p&\cdots\\
		(S_i)_{1,0}& I_p&0_p&\cdots\\
		(S_i)_{2,0}& (S_i)_{2,1}&I_p&\ddots\\
		\Vdots[shorten-end=2pt]	&\Vdots[shorten-end=2pt]&\Ddots[shorten-end=-20pt]&\ddots
	\end{NiceMatrix}\right], &i&=1,2,&
	H&=\diag(
	H_0,H_1, H_2,\dots),
\end{align*}
with $(S_i)_{n,m}$ and $H_n\in\mathbb C^{p\times p}$, $ n,m\in\N_0$. Here $0_p\in\C^{p\times p}$ is the zero matrix.
\end{pro}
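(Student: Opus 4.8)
The plan is to reduce the semi-infinite statement to a sequence of finite block factorizations and then glue them together, with quasidefiniteness supplying exactly the invertibility of the pivots. First I would rewrite the desired identity \eqref{eq:gauss} in the equivalent form $S_1 G S_2^\top = H$, so that the claim becomes: $G$ admits a block $LDU$ decomposition with lower unitriangular factors $S_1^{-1}$, $S_2^{-\top}$ and block-diagonal middle factor $H$. Because $S_1$ and $S_2$ are unitriangular, their inverses exist as triangular matrices and every entry of the triple product $S_1^{-1} H S_2^{-\top}$ is a finite sum; hence no convergence issue arises and it suffices to determine the blocks $(S_i)_{n,m}$ and $H_n$.

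Next I would argue by induction on the truncation level. Writing the block bordering
\begin{align*}
G_{[n+1]}=\begin{bmatrix} G_{[n]} & c_n\\ r_n & G_{n,n}\end{bmatrix}, && c_n=\begin{bmatrix}G_{0,n}\\ \vdots\\ G_{n-1,n}\end{bmatrix}, && r_n=\begin{bmatrix}G_{n,0}&\cdots&G_{n,n-1}\end{bmatrix},
\end{align*}
and assuming the unique factorization of $G_{[n]}$ has already been built, one block Gaussian-elimination step produces the new pivot as the Schur complement $H_n=G_{n,n}-r_n\,(G_{[n]})^{-1}c_n$, together with the new block row of $S_1$ and new block column of $S_2$ obtained by solving triangular linear systems against the previously constructed factors. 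Solvability and uniqueness of these systems follow from $\det G_{[n]}\neq 0$, while the initial step is trivial ($H_0=G_{0,0}$). This is the mechanism that recursively fixes all the unknown blocks.

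The crucial point, and where nondegeneracy enters, is that quasidefiniteness forces every pivot to be invertible: from the Schur identity $\det G_{[n+1]}=\det G_{[n]}\cdot\det H_n$ together with the hypotheses $\det G_{[n]}\neq 0$ and $\det G_{[n+1]}\neq 0$ we obtain $\det H_n\neq 0$, so each block $H_n$ is nonsingular and $H=\diag(H_0,H_1,\dots)$ has the asserted form. Finally I would pass to the limit: the uniqueness at each finite stage implies that the factorization of $G_{[n]}$ is precisely the leading principal truncation of that of $G_{[n+1]}$, so the blocks $(S_i)_{n,m}$ and $H_n$ are independent of the truncation level in which they are computed; assembling them yields the semi-infinite lower unitriangular $S_1,S_2$ and the block-diagonal $H$, and global uniqueness descends from the uniqueness of every finite truncation. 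I expect the only genuine subtlety to be the bookkeeping in this gluing step — checking that triangularity renders the infinite products entrywise finite and that finite-level uniqueness propagates coherently — since the sole computational input, the invertibility of the Schur complements $H_n$, is immediate from quasidefiniteness.
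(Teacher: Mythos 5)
Your proof is correct and follows essentially the same route as the source the paper relies on: this proposition is stated here without proof, with a citation to \cite{ari}, where the argument is precisely the block Gaussian-elimination/Schur-complement induction on truncations that you give, with quasidefiniteness supplying the invertibility of the pivots $H_n$ via $\det G_{[n+1]}=\det G_{[n]}\det H_n$ and uniqueness propagating from the finite truncations. No gaps; the gluing step you flag as the only subtlety is handled correctly by your observation that triangularity makes truncation compatible with the products and inverses involved.
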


	We define  $\chi(x):=\left[\begin{NiceMatrix}
	I_p&I_p x&I_px^2&\Cdots
	\end{NiceMatrix}\right]^\top$.

%\begin{rem}
%	Observe that the Gram matrix can be expressed as
%	\begin{align}\label{eq:M_chi}
%	G&=\prodint{\chi(x), \chi(y)}_u\\
%	\notag &=\langle u_{x,y}, \chi(x)\otimes \chi(y)\rangle
%	\end{align}
%	and its block  entries  are
%	\begin{align*}
%	G_{k,l}=\prodint{ I_px^k, I_py^l}_{u}.
%		\end{align*}
%	If the sesquilinear form derives from a matrix of bivariate measures $\mu(x,y)=[\mu_{i.j}(x,y)]$ we have for the Gram matrix blocks
%	\begin{align*}
%G_{k,l}=\iint x^k\d\mu(x,y )y^l.
%	\end{align*}
%	which reduces for absolutely continuous measures with respect the Lebesgue measure $\d x\d y$ to  a matrix of weights $w(x,y)=[w_{i,j}(x,y)]$, and
%	When the matrix of generalized kernels is  Hankel we recover the classical Hankel structure, and the Gram matrix is
%	a moment matrix. For example, for a matrix of measures we will have $G_{k,l}=\int x^{k+l}\d\mu(x )$.
%	
%\end{rem}
	\begin{defi}\label{defi:bio2kind}
Given a quasidefinite matrix of generalized kernels $u_{x,y}$ and the Gauss--Borel factorization \eqref{pro:fac} of its Gram matrix, the corresponding first and second families of matrix polynomials are
\begin{align}\label{eq:bior}
	P^{[1]}(x)=\begin{bNiceMatrix}
		P^{[1]}_0(x)\\[5pt]P^{[1]}_1(x)\\\Vdots[shorten-end=3pt]
	\end{bNiceMatrix}&\coloneq S_1\chi(x), &  P^{[2]}(y)=\begin{bNiceMatrix}
		P^{[2]}_0(y)\\[5pt]P^{[2]}_1(y)\\\Vdots[shorten-end=3pt]
	\end{bNiceMatrix}&\coloneq S_2\chi(y),
\end{align}
respectively.
%The corresponding  first and second families of second kind functions  á la Gram are
%		\begin{align}\label{eq:second_kind}
%	C^{[1]}(x)=\begin{bmatrix}
%	C^{[1]}_0(x)\\C^{[1]}_1(x)\\\vdots
%	\end{bmatrix}&\coloneqH\big(S_2\big)^{-\top}\chi^{*}(x), &  	C^{[2]}(y)=\begin{bmatrix}
%	C^{[2]}_0(y)\\C^{[2]}_1(y)\\\vdots
%	\end{bmatrix}&\coloneqH^\top\big(S_1\big)^{-\top}\chi^{*}(y),
%	\end{align}
%	respectively, whenever the series involved converge.
\end{defi}
%\begin{rem}
%	The matrix polynomials $P_n^{[i]}(x)$ are monic and  $\deg P_n^{[i]}(x)=n$, $i=1,2$.
%\end{rem}
%\begin{rem}
%	We see that the second kind functions  á la Gram implies non admissible products, in the sense of  \cite{Cantero}, however we will see that the series involved do converge, in some general situations, to certain  Cauchy transformations of the biorthogonal families of matrix polynomials.
%\end{rem}

\begin{pro}[Biorthogonality and orthogonality]
Given a quasidefinite matrix of generalized kernels $u_{x,y}$, the first and second families of monic matrix polynomials $\big\{P_n^{[1]}(x)\big\}_{n=0}^\infty$ and $\big\{P_n^{[2]}(x)\big\}_{n=0}^\infty$ are biorthogonal, i.e.,
\begin{align}\label{eq:biorthogonal}
\begin{aligned}
		\prodint{P^{[1]}_n(x),P^{[2]}_m(y)}_u&=\delta_{n,m}H_n,& n,m&\in\N_0.
\end{aligned}
\end{align}
For $m\in\{1,\dots,n-1\}$, the following orthogonality relations hold
\begin{align}
	\label{eq:orthogonality1}\prodint{P^{[1]}_n(x),y^mI_p}_u&=0_p,&\prodint{x^mI_p, P^{[2]}_n(y)}_u&= 0_p,\\
	\label{eq:orthogonality2}\prodint{P^{[1]}_n(x),y^nI_p}_u&= H_n, & \prodint{x^nI_p, P^{[2]}_n(y)}_u&= H_n.
\end{align}
\end{pro}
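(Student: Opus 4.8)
The plan is to recast both identities as block-matrix statements and then feed in the Gauss--Borel factorization of Proposition~\ref{pro:fac}. The first step is a bookkeeping identity. Writing $G_{k,l}=\prodint{I_px^k,I_py^l}_u$ and using the left-linearity of property (1) together with the transpose-twisted right-linearity of property (2) in Definition~\ref{def:sesquilinear}, the form evaluated on the two families collapses, block by block, to a finite double sum
\begin{align*}
\prodint{P^{[1]}_n(x),P^{[2]}_m(y)}_u=\sum_{k,l}(S_1)_{n,k}\,G_{k,l}\,\big((S_2)_{m,l}\big)^\top,
\end{align*}
which is exactly the $(n,m)$ block of the semi-infinite product $S_1\,G\,S_2^\top$, where $S_2^\top$ denotes the block transpose (transpose of the block positions with each entry block transposed). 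Since each block is a genuinely finite sum, the formal infinite size and the continuity of the form cause no difficulty. Thus, at the level of the whole arrays, $\prodint{P^{[1]}(x),P^{[2]}(y)}_u=S_1\,G\,S_2^\top$.

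Biorthogonality then follows in a single line: substituting $G=(S_1)^{-1}H(S_2)^{-\top}$ yields
\begin{align*}
\prodint{P^{[1]}(x),P^{[2]}(y)}_u=S_1(S_1)^{-1}H(S_2)^{-\top}S_2^\top=H=\diag(H_0,H_1,\dots),
\end{align*}
whose $(n,m)$ block is precisely $\delta_{n,m}H_n$, giving \eqref{eq:biorthogonal}.

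For the one-sided relations I would run the same computation with one argument replaced by the monomial column $\chi$. Since $y^mI_p$ is the $m$-th entry of $\chi(y)$, the quantity $\prodint{P^{[1]}_n(x),y^mI_p}_u$ is the $(n,m)$ block of $\prodint{S_1\chi(x),\chi(y)}_u=S_1G=H(S_2)^{-\top}$. As $S_2$ is lower unitriangular, $(S_2)^{-\top}$ is upper unitriangular, so $H(S_2)^{-\top}$ is upper block-triangular with diagonal blocks $H_n$; hence the blocks below the diagonal vanish, giving $0_p$ for $m<n$, while the diagonal block equals $H_n$, which is \eqref{eq:orthogonality1} and \eqref{eq:orthogonality2} on the left. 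Symmetrically, $\prodint{x^mI_p,P^{[2]}_n(y)}_u$ is the $(m,n)$ block of $\prodint{\chi(x),S_2\chi(y)}_u=GS_2^\top=(S_1)^{-1}H$, which is lower block-triangular with the same diagonal blocks $H_n$, yielding the right-hand relations.

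I do not anticipate a genuine obstacle here; the single point demanding care is the correct treatment of the transpose in property (2), which forces $S_2$ to appear as its block transpose (and therefore as $(S_2)^{-\top}$ once the factorization is inserted). Getting this orientation right is exactly what makes the two one-sided products reduce to $H(S_2)^{-\top}$ and $(S_1)^{-1}H$ with the correct triangular shape, thereby producing the vanishing blocks on the appropriate side of the diagonal.
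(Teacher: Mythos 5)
Your proof is correct and is essentially the argument the paper relies on (the paper states this proposition without a written proof, as it follows directly from the construction): expanding the form via properties (1)--(2) gives $\prodint{P^{[1]}(x),P^{[2]}(y)}_u=S_1GS_2^\top$, and inserting the Gauss--Borel factorization \eqref{eq:gauss} yields $H$, while the one-sided relations follow from the triangular structure of $H(S_2)^{-\top}$ and $(S_1)^{-1}H$. Your handling of the transpose convention for $S_2$ and of the finiteness of the block sums is exactly right, so nothing is missing.
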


\begin{defi}[Christoffel--Darboux kernel,\cite{DAS, simon-cd}]
Given two sequences of matrix biorthogonal polynomials
$\big\{P_k^{[1]}(x)\big\}_{k=0}^\infty$ and $\big\{P_k^{[2]}(y)\big\}_{k=0}^\infty$, with respect to the sesquilinear form $\prodint{\cdot,\cdot}_u$, we define the $n$-th Christoffel--Darboux kernel matrix polynomial
\begin{align}\label{eq:CD kernel}
	K_{n}(x,y):=\sum_{k=0}^{n}(P_k^{[2]}(y))^\top( H_k)^{-1}P^{[1]}_k(x).
\end{align}
%and the  mixed Christoffel--Darboux kernel
%\begin{align*}
%	K^{(pc)}_n(x,y)&\coloneq\sum_{k=0}^n\big(P_k^{[2]}(y)\big)^\top (H_k)^{-1}C_k^{[1]}(x).	
%\end{align*}
\end{defi}
\begin{pro}
	\begin{enumerate}
		\item For a quasidefinite matrix of generalized kernels $u_{x,y}$, the corresponding  Christoffel--Darboux kernel gives the projection operator (reproducing property)
		\begin{align}\label{eq:reproducing}
		\prodint{ K_n(x,z),\sum_{0\leq j\ll\infty} C_j P^{[2]}_j(y)}_u&=
	\Big(	\sum_{j=0}^nC_jP_j^{[2]}(z)\Big)^\top,\\	\prodint{ \sum_{0\leq j\ll\infty}C_jP^{[1]}_j(x),(K_n(z,y))^\top}_u&=
		\sum_{j=0}^nC_jP^{[1]}_j(z).
		\end{align}
		\item
		In particular, we have
		\begin{align}\label{eq:K-u}
		\prodint{ K_n(x,z),I_py^l}_u&=I_pz^l, & l\in&\{0,1,\dots,n\}.
		\end{align}
	\end{enumerate}
\end{pro}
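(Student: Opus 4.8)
The plan is to establish both reproducing identities directly from the definition \eqref{eq:CD kernel} of $K_n$, the biorthogonality relations \eqref{eq:biorthogonal}, and the two (anti)linearity rules of Definition~\ref{def:sesquilinear}; the particular case \eqref{eq:K-u} then drops out as a corollary. For the first identity I would fix $z$ and regard $K_n(x,z)=\sum_{k=0}^{n}\big(P_k^{[2]}(z)\big)^\top (H_k)^{-1}P_k^{[1]}(x)$ as a polynomial in $x$ whose matrix coefficients $\big(P_k^{[2]}(z)\big)^\top (H_k)^{-1}$ sit on the left. Property (1) then extracts these coefficients out of the first slot, while property (2), applied to the finite sum $\sum_{j}C_jP_j^{[2]}(y)$ in the second slot, produces the factors $C_j^\top$ on the right. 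What survives is $\prodint{P_k^{[1]}(x),P_j^{[2]}(y)}_u=\delta_{k,j}H_k$, so the double sum collapses to
\[
\sum_{k=0}^{n}\big(P_k^{[2]}(z)\big)^\top (H_k)^{-1}H_kC_k^\top=\sum_{k=0}^{n}\big(C_kP_k^{[2]}(z)\big)^\top=\Big(\sum_{k=0}^{n}C_kP_k^{[2]}(z)\Big)^\top.
\]
The crucial observation is that the Kronecker delta annihilates every term of the input with index $j>n$, which is exactly why the right-hand side truncates at $n$ no matter how many terms the input carries.

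For the second identity I would first transpose the kernel, writing $(K_n(z,y))^\top=\sum_{k=0}^{n}\big(P_k^{[1]}(z)\big)^\top (H_k)^{-\top}P_k^{[2]}(y)$, once more a polynomial in $y$ with coefficients on the left. Here property (1) pulls the $C_j$ out of the first slot, and property (2) pulls the kernel coefficients out of the second slot after transposition, so that $\big((P_k^{[1]}(z))^\top (H_k)^{-\top}\big)^\top=(H_k)^{-1}P_k^{[1]}(z)$ appears. Biorthogonality again reduces the double sum to its diagonal, and the factor $(H_k)^{-1}$ cancels against the $H_k$ coming from \eqref{eq:biorthogonal}, leaving $\sum_{j=0}^{n}C_jP_j^{[1]}(z)$, with the same truncation mechanism as before.

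To obtain the particular case, I would use that each $P_j^{[2]}$ is monic of degree $j$, so that $\{P_j^{[2]}\}_{j=0}^{l}$ is a basis of the matrix polynomials of degree at most $l$ and $I_py^l=\sum_{j=0}^{l}C_jP_j^{[2]}(y)$ for suitable coefficients with $C_l=I_p$. Applying the first identity with $n\ge l$, the truncation at $n$ retains all $l+1$ terms, whence $\prodint{K_n(x,z),I_py^l}_u=\big(\sum_{j=0}^{l}C_jP_j^{[2]}(z)\big)^\top=(I_pz^l)^\top=I_pz^l$. The only delicate point in the whole argument is the consistent bookkeeping of transposes forced by the right-antilinearity rule (2), and in particular the extra transpose carried by the kernel in the second identity; once the factors $(H_k)^{-\top}$ and their transposes are tracked correctly, both computations are entirely mechanical.
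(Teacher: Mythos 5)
Your proposal is correct and takes essentially the same approach as the paper: the paper's proof consists of the single remark that the result ``follows directly from the biorthogonality condition'' \eqref{eq:biorthogonal}, and your argument is exactly that computation carried out explicitly --- expanding $K_n$ via \eqref{eq:CD kernel}, pulling matrix coefficients through the two sesquilinearity rules, and letting the Kronecker delta collapse the double sum. Your transpose bookkeeping (including the factor $(H_k)^{-\top}$ in the second identity) and the monicity argument for the particular case \eqref{eq:K-u} are both sound.
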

\begin{proof}
	It follows directly from the biorthogonality condition  \eqref{eq:biorthogonal}.
\end{proof}

\section{Matrix Uvarov transformations}

Uvarov perturbations for the scalar case, involving a finite number of Dirac deltas, were first considered in \S 2 of \cite{Uva} within the context of orthogonal polynomials with respect to an integral scalar product associated with a probability measure supported on the real line. Subsequently, for the matrix case, this topic was explored in a series of papers \cite{Yakhlef1,Yakhlef2,Yakhlef3}, where the corresponding Christoffel--Geronimus--Uvarov formula for perturbed polynomials was derived, specifically when a sole Dirac delta supported on a point of the real line is added to a matrix of measures. Now, we extend this analysis to the general case of an additive perturbation with a discrete finite support in the $y$-variable of a sesquilinear form. Therefore, we allow to deal with  additive perturbations that have, in the $y$-variable, an arbitrary finite number of derivatives of the Dirac delta at several different points, along with arbitrary linearly independent generalized functions in the $x$-variable.
%Despite the result found largely
%extends the Christoffel formulas in the papers of Yakhlef and coworkers, we have three more  reasons to discuss this material. Firstly to show how some of the tools, like spectral jets, used in previous sections of the paper also apply in this context,  secondly,
%to achieve a more complete account of the family of transformations of Darboux type for matrix orthogonal polynomials and thirdly to apply it to orthogonalities  like that of matrix discrete Sobolev type, see \cite{Marcellan2014Sobolev}.
%For the multivariate scenario these transformations have been discussed in \cite{Delgado} and \cite{Delgado2,Ariznabarreta-linear}.

\subsection{Additive perturbations}
Here, we present an intriguing formula for additive perturbations of matrix generalized kernels, which will play a crucial role in our discussion of the matrix Uvarov transformation.
\begin{pro}[Additive perturbation and reproducing kernels]\label{pro:additive}
Let consider an additive perturbation of the matrix of bivariate generalized functions $u_{x,y}$ as
$
\hat u_{x,y}= u_{x,y}+v_{x,y}$,
and assume that both $u_{x,y}$ and $\hat u_{x,y}$ are quasidefinite. Then, the following expressions hold:
\begin{align*}
	\hat P^{[1]}_{n}(z)&=
	P^{[1]}_{n}(z)-\big\langle\hat P^{[1]}_n(x),\big(K_{n-1}(z,y)\big)^\top\big\rangle_v,\\
	(\hat P^{[2]}_{n}(z))^\top&=(P^{[2]}_{n}(z))^\top-\big\langle K_{n-1}(x,z),\hat P^{[2]}_n(y)\big\rangle_v
\end{align*}
and
\begin{align*}
	\hat H_n&= H_n+\prodint{\hat P^{[1]}_n(x),P^{[2]}_n(y)}_{v}=H_n+\prodint{ P^{[1]}_n(x),\hat P^{[2]}_n(y)}_{v}.
\end{align*}
\end{pro}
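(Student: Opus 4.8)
The plan is to exploit that $\hat P^{[1]}_n$ and $P^{[1]}_n$ are both monic of degree $n$, so that their difference is a matrix polynomial of degree at most $n-1$, together with the reproducing property \eqref{eq:reproducing} of the \emph{unperturbed} Christoffel--Darboux kernel. First I would expand $\hat P^{[1]}_n(x)=\sum_{j=0}^{n}C_jP^{[1]}_j(x)$ in the unperturbed basis, with $C_n=I_p$ by monicity, and apply the second identity of \eqref{eq:reproducing} with $K_{n-1}$:
\[
\prodint{\hat P^{[1]}_n(x),(K_{n-1}(z,y))^\top}_u=\sum_{j=0}^{n-1}C_jP^{[1]}_j(z)=\hat P^{[1]}_n(z)-P^{[1]}_n(z),
\]
the last equality using $C_n=I_p$. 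Hence $\hat P^{[1]}_n(z)=P^{[1]}_n(z)+\prodint{\hat P^{[1]}_n(x),(K_{n-1}(z,y))^\top}_u$.

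The second step converts the $u$-pairing into a $v$-pairing. Transposing \eqref{eq:CD kernel} shows that $(K_{n-1}(z,y))^\top$ is, as a function of $y$, a matrix polynomial of degree at most $n-1$. Because $\hat u$ is assumed quasidefinite, the Proposition on biorthogonality applies to the hatted family and gives the analogue of \eqref{eq:orthogonality1}, namely that $\hat P^{[1]}_n$ is $\hat u$-orthogonal to every polynomial of degree $<n$ in the second slot; thus $\prodint{\hat P^{[1]}_n(x),(K_{n-1}(z,y))^\top}_{\hat u}=0$. Since $\hat u_{x,y}=u_{x,y}+v_{x,y}$ makes the form additive in its kernel, $\langle\cdot,\cdot\rangle_u=\langle\cdot,\cdot\rangle_{\hat u}-\langle\cdot,\cdot\rangle_v$, whence $\prodint{\hat P^{[1]}_n(x),(K_{n-1}(z,y))^\top}_u=-\prodint{\hat P^{[1]}_n(x),(K_{n-1}(z,y))^\top}_v$. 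Substituting into the previous display yields the first formula, and the identity for $(\hat P^{[2]}_n(z))^\top$ follows verbatim after interchanging the two families and using the first identity of \eqref{eq:reproducing}.

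For the norms I would start from $\hat H_n=\prodint{\hat P^{[1]}_n(x),\hat P^{[2]}_n(y)}_{\hat u}$, coming from \eqref{eq:biorthogonal} applied to $\hat u$. Writing $\hat P^{[2]}_n(y)=P^{[2]}_n(y)+D(y)$ with $\deg D\le n-1$ and invoking $\hat u$-orthogonality in the second slot kills the $D$-contribution, so $\hat H_n=\prodint{\hat P^{[1]}_n(x),P^{[2]}_n(y)}_{\hat u}$. Splitting $\hat u=u+v$ and then discarding the lower-degree part of $\hat P^{[1]}_n$ using that $P^{[2]}_n$ is $u$-orthogonal in the first slot (so that $\prodint{\hat P^{[1]}_n(x),P^{[2]}_n(y)}_u=\prodint{P^{[1]}_n(x),P^{[2]}_n(y)}_u=H_n$ by \eqref{eq:orthogonality1} and \eqref{eq:biorthogonal}) gives $\hat H_n=H_n+\prodint{\hat P^{[1]}_n(x),P^{[2]}_n(y)}_v$. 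The symmetric computation, discarding instead the lower-degree part of $\hat P^{[1]}_n$ first, produces the expression $H_n+\prodint{P^{[1]}_n(x),\hat P^{[2]}_n(y)}_v$.

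The only delicate point is the bookkeeping: one must track the transposes built into axiom (2) of Definition \ref{def:sesquilinear} and keep straight which family's orthogonality (first versus second slot) is invoked at each degree-reduction step, since the argument is applied in both entries. No analytic obstacle arises; everything reduces to degree counting for monic families together with the additive split of the form induced by $\hat u_{x,y}=u_{x,y}+v_{x,y}$.
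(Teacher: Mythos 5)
Your proof is correct and takes essentially the same route as the paper's: both establish the identities by applying the reproducing property \eqref{eq:reproducing} of the unperturbed kernel $K_{n-1}$ to the degree-$(n-1)$ part of $\hat P^{[1]}_n$ (you do this via the expansion $\hat P^{[1]}_n=\sum_{j\le n}C_jP^{[1]}_j$ with $C_n=I_p$, the paper via the difference $\hat P^{[1]}_n-P^{[1]}_n$, which is the same idea), then convert $u$-pairings into $v$-pairings using the $\hat u$-orthogonality of the hatted polynomials against polynomials of degree $<n$ together with the additive split $\langle\cdot,\cdot\rangle_{\hat u}=\langle\cdot,\cdot\rangle_{u}+\langle\cdot,\cdot\rangle_{v}$, and obtain $\hat H_n$ from the mixed pairings $\prodint{\hat P^{[1]}_n(x),P^{[2]}_n(y)}_{\hat u}$ and $\prodint{P^{[1]}_n(x),\hat P^{[2]}_n(y)}_{\hat u}$. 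The bookkeeping caveats you flag (transposes in the second slot, which family's orthogonality is used where) are handled identically in the paper and pose no obstacle.
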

\begin{proof}
	From \eqref{eq:orthogonality1} and \eqref{eq:orthogonality2}	we deduce
\begin{gather}
\label{eq:additive1}\begin{aligned}
\prodint{\hat P^{[1]}_n(x),P^{[2]}_m(y)}_{\hat{u}}&=0_p,&\prodint{P^{[1]}_m(x), \hat P^{[2]}_n(y)}_{\hat u}&= 0_p,  &m&\in\{1,\dots n-1\},
\end{aligned}\\
\label{eq:additive2}
\prodint{\hat P^{[1]}_n(x),P^{[2]}_n(y)}_{\hat u}= \hat H_n= \prodint{ P^{[1]}_n(x), \hat P^{[2]}_n(y)}_{\hat u}.
\end{gather}
%Equation \eqref{eq:additive1} can be expressed as
%\begin{align*}
%\prodint{\hat P^{[1]}_n(x),P^{[2]}_m(y)}_{{u}}&=-\prodint{\hat P^{[1]}_n(x),P^{[2]}_m(y)}_{v},&\prodint{P^{[1]}_m(x), \hat P^{[2]}_n(y)}_{u}&=-\prodint{P^{[1]}_m(x), \hat P^{[2]}_n(y)}_{v},  &m&\in\{1,\dots n-1\}.
%\end{align*}
Recalling \eqref{eq:CD kernel} we get
\begin{align*}
\prodint{\hat P^{[1]}_n(x),(K_{n-1}(z,y))^\top}_{{u}}&=-\prodint{\hat P^{[1]}_n(x),(K_{n-1}(z,y))^\top}_{v},\\\prodint{K_{n-1}(x,z), \hat P^{[2]}_n(y)}_{u}&=-\prodint{K_{n-1}(x,z), \hat P^{[2]}_n(y)}_{v}.
\end{align*}
But, notice that $\hat P^{[1]}_n(x)- P^{[1]}_n(x)$ and $\hat P^{[2]}_n(y)- P^{[2]}_n(y)$ have degree $n-1$ and, therefore, recalling \eqref{eq:reproducing}
we deduce
\begin{align*}
\hat P^{[1]}_n(z)- P^{[1]}_n(z)&=	\prodint{ \hat P^{[1]}_n(x)- P^{[1]}_n(x),(K_{n-1}(z,y))^\top}_u\\
&=\prodint{ \hat P^{[1]}_n(x),(K_{n-1}(z,y))^\top}_u=-\prodint{ \hat P^{[1]}_n(x),(K_{n-1}(z,y))^\top}_v,\\
\big(\hat P^{[2]}_n(z)- P^{[2]}_n(z)\big)^\top&=	\prodint{ K_{n-1}(x,z),\hat P^{[2]}_n(y)- P^{[2]}_n(y)}_u\\
&=	\prodint{ K_{n-1}(x,z),\hat P^{[2]}_n(y)}_u=-	\prodint{ K_{n-1}(x,z),\hat P^{[2]}_n(y)}_v.
\end{align*}

Finally, from \eqref{eq:additive2} we get
\begin{align*}
\hat H_n&=\prodint{\hat P^{[1]}_n(x),P^{[2]}_n(y)}_{\hat u}=\prodint{\hat P^{[1]}_n(x),P^{[2]}_n(y)}_{ u}+\prodint{\hat P^{[1]}_n(x),P^{[2]}_n(y)}_{v}\\
&= H_n+\prodint{\hat P^{[1]}_n(x),P^{[2]}_n(y)}_{v},
\end{align*}
as well as
\begin{align*}
\hat H_n&=\prodint{ P^{[1]}_n(x),\hat P^{[2]}_n(y)}_{\hat u}=\prodint{ P^{[1]}_n(x),\hat P^{[2]}_n(y)}_{ u}+\prodint{ P^{[1]}_n(x),\hat P^{[2]}_n(y)}_{v}\\
&= H_n+\prodint{ P^{[1]}_n(x),\hat P^{[2]}_n(y)}_{v}.
\end{align*}
\end{proof}

\subsection{Matrix Christoffel--Uvarov formulas for Uvarov additive perturbations}

For functionals $\big(\beta^{(j)}_m\big)_x\in\big(\mathbb C^{p\times p}[x]\big)'$, we consider the following additive Uvarov  perturbation
\begin{align}\label{eq:v_uvarov_general}
\hat u_{x,y}&=u_{x,y}+v_{x,y}, &
v_{x,y}&=\sum_{j=1}^q\sum_{m=0}^{\kappa^{(j)}-1}\frac{(-1)^m}{m!}\big(\beta^{(j)}_m\big)_x\otimes\delta^{(m)}(y-x_j),
\end{align}
which has a finite support on the $y$ variable at the set $\{x_j\}_{j=1}^q$.
The set of  linear functionals $\big\{\big(\beta^{(j)}_m\big)_x\big\}_{\substack{j=1,\dots,q\\
		m=0,\dots,\kappa^{(j)}-1}}$ is supposed to be linearly independent in the bimodule $\big(\mathbb C^{p\times p}[x]\big)'$, i.e., for $ X^{(j)}_m \in\mathbb C^{p\times p}$ the unique solution to
$	\sum_{j=1}^q\sum_{m=0}^{\kappa^{(j)}-1}\big(\beta^{(j)}_m\big)_xX^{(j)}_m =0$
is $X^{(j)}_m=0_p$,
and for $Y^{(j)}_m \in\mathbb C^{p\times p}$ the unique solution to
$
	\sum_{j=1}^q\sum_{m=0}^{\kappa^{(j)}-1} Y^{(j)}_m\big(\beta^{(j)}_m\big)_x=0$
is $Y^{(j)}_m=0_p$.

We will assume along this subsection that both $u_{x,y}$ and $u_{x,y}+v_{x,y}$ are quasidefinite matrices of bivariate generalized functions.

\begin{defi}
We will define the degree of the Uvarov perturbation as $N=\kappa^{(1)}+\dots+\kappa^{(q)}$. The spectral jet associated with the finite support matrix of linear functionals $v_{x,y}$ is, for any sufficiently smooth matrix function $f(x)$ defined in an open set in $\mathbb R$ containing the support $\{x_1,\dots,x_q\}$, the following matrix:
	\begin{align*}
	\mathcal J_f=\begin{bmatrix}
f(x_1)&\Cdots&\dfrac{(f(x))_{x_1}^{(\kappa^{(1)}-1)}}{(\kappa^{(1)}-1)!}&\Cdots&f(x_q)&\cdots&\dfrac{(f(x))_{x_q}^{(\kappa^{(q)}-1)}}{(\kappa^{(q)}-1)!}
	\end{bmatrix}\in\mathbb C^{p\times Np}.
	\end{align*}
	For a  matrix of kernels  $K(x,y)$, we have
		\begin{align*}
		\mathcal J^{[0,1]}_K(x)=\begin{bmatrix}
		K(x,x_1)\\\vdots\\\dfrac{(K(x,y))_{x,x_1}^{(0,\kappa^{(1)}-1)}}{(\kappa^{(1)}-1)!}\\\vdots\\K(x,x_q)\\\vdots\\\dfrac{(K(x,y))_{x,x_q}^{(0,\kappa^{(q)}-1)}}{(\kappa^{(q)}-1)!}
		\end{bmatrix}\in\mathbb C^{Np\times p}.
		\end{align*}
		We also require  the introduction of the following matrices in $\mathbb C^{ p\times Np}$
{		 \begin{gather*}
 \prodint{
 	P(x),(\beta)_x}	:=
 \begin{bmatrix}
 \prodint{P(x),\big(\beta^{(1)}_{0}\big)_x }&\dots&
% \prodint{P(x),\big(\beta^{(1)}_{\kappa^{(1)}-1}\big)_x}&\dots& \prodint{P(x),\big(\beta^{(q)}_{0}\big)_x}&\dots&
 \prodint{P(x),\big(\beta^{(q)}_{\kappa^{(q)}-1}\big)_x}
 \end{bmatrix}.
 \end{gather*}}
\end{defi}

\begin{pro}\label{pro:uvarov0}
	The following relations
				\begin{align*}
				\prodint{\hat P^{[1]}_n(x), (K_{n-1}(z,y))^\top}_v&= \prodint{\hat P_n^{[1]}(x),(\beta)_x}\mathcal  J^{[0,1]}_{K_{n-1}}(z),\\
				\prodint{K_{n-1}(x,z), \hat P^{[2]}_n(y)}_v&=
				\prodint{K_{n-1}(x,z),(\beta)_x}\big(\mathcal  J_{\hat P^{[2]}_n}\big)^\top
				\end{align*}
				hold.
\end{pro}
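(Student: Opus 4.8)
The plan is to prove both identities by direct substitution of the explicit kernel $v_{x,y}$ from \eqref{eq:v_uvarov_general} into the sesquilinear form and then exploiting its factorized, finite-support structure. Since $v_{x,y}$ is a finite sum of tensor products $\big(\beta^{(j)}_m\big)_x\otimes\delta^{(m)}(y-x_j)$, the pairing $\prodint{P(x),Q(y)}_v$ splits, for each term, into an $x$-pairing against $\big(\beta^{(j)}_m\big)_x$ and a distributional $y$-pairing against the delta derivative. The two identities are structurally identical, so I would carry out the first in full and obtain the second by the analogous computation.

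For the first identity I would set $P(x)=\hat P^{[1]}_n(x)$ and $Q(y)=(K_{n-1}(z,y))^\top$ and write
\[
\prodint{\hat P^{[1]}_n(x),(K_{n-1}(z,y))^\top}_v=\sum_{j=1}^q\sum_{m=0}^{\kappa^{(j)}-1}\frac{(-1)^m}{m!}\,\big\langle\big(\beta^{(j)}_m\big)_x\otimes\delta^{(m)}(y-x_j),\hat P^{[1]}_n(x)\otimes (K_{n-1}(z,y))^\top\big\rangle.
\]
The central step is to evaluate the $y$-pairing: for a smooth function $g$ one has $\big\langle\delta^{(m)}(y-x_j),g(y)\big\rangle=(-1)^m g^{(m)}(x_j)$, so the prefactor $(-1)^m/m!$ combines with this sign to leave exactly $g^{(m)}(x_j)/m!$. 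Applied with $g(y)=(K_{n-1}(z,y))^\top$, this is precisely the $(j,m)$-block of the spectral jet column $\mathcal J^{[0,1]}_{K_{n-1}}(z)$, the $(0,m)$-derivative notation in the jet matching the $m$-th $y$-derivative of the kernel evaluated at $x_j$. The surviving $x$-pairing is $\prodint{\hat P^{[1]}_n(x),\big(\beta^{(j)}_m\big)_x}$, i.e.\ the $(j,m)$-entry of the row $\prodint{\hat P^{[1]}_n(x),(\beta)_x}$. Re-summing over $(j,m)$ then collapses the double sum into the row-times-column product $\prodint{\hat P^{[1]}_n(x),(\beta)_x}\,\mathcal J^{[0,1]}_{K_{n-1}}(z)$, as claimed.

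The main obstacle I anticipate is keeping the matrix and transpose bookkeeping consistent, since $\prodint{\cdot,\cdot}_v$ is not symmetric and applies a transpose to its second argument. One has to verify that the $x$-factor assembles on the left and the jet on the right, so that the concatenation over $(j,m)$ really produces the stated row-by-column matrix product and not its transpose, and to check the placement of transposes. In particular, for the first identity the pre-transposed kernel $(K_{n-1}(z,y))^\top$ combines with the transpose built into the sesquilinear form to give the jet $\mathcal J^{[0,1]}_{K_{n-1}}$ with no extra transpose, whereas for the second identity the un-transposed second argument $\hat P^{[2]}_n(y)$ produces the transposed jet $\big(\mathcal J_{\hat P^{[2]}_n}\big)^\top$; tracking these, together with the sign cancellation $(-1)^m(-1)^m=1$ and the $1/m!$ normalization against the jet's definition, is the only genuinely delicate part of the argument.
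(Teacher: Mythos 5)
Your proposal is correct and follows essentially the same route as the paper's proof: direct substitution of the finite tensor-product form of $v_{x,y}$ into the sesquilinear form, evaluation of the $\delta^{(m)}$ pairings so that the $(-1)^m$ signs cancel against the $(-1)^m/m!$ prefactors, and reassembly of the resulting double sum as the row-by-column block product with the spectral jet. Your extra care with the transpose bookkeeping makes explicit what the paper leaves implicit, but it is the same argument.
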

\begin{proof}
Direct substitution gives
	\begin{align*}
	\prodint{\hat P^{[1]}_n(x), (K_{n-1}(z,y))^\top}_v&= \sum_{j=1}^q\sum_{m=0}^{\kappa^{(j)}-1}\frac{1}{m!}
	\prodint{\hat P_n^{[1]}(x),\big(\beta^{(j)}_m\big)_x}\Big(K_{n-1}(z,x)\Big)^{(m)}_{x_j},\\
	\prodint{K_{n-1}(x,z), \hat P^{[2]}_n(y)}_v&= \sum_{j=1}^q\sum_{m=0}^{\kappa^{(j)}-1}\frac{1}{m!}	\prodint{K_{n-1}(x,z),\big(\beta^{(j)}_m\big)_x}
	\Big(\big(\hat P^{[2]}_n(y)\big)^\top\Big)^{(m)}_{x_j},
	\end{align*}
	and the result follows.	
\end{proof}

\begin{defi}\label{def:kernels}
	For $j\in\{1,\dots,q\},
	m\in\{1,\dots,
	\kappa^{(j)}-1\}$, let us define the left %and right
	kernel subspace
	\begin{align*}
	\operatorname{Ker}^R_\beta:=\Big\{P(x)\in\mathbb C^{p\times p}[x]:
	\prodint{\big(\beta^{(j)}_m\big)_x,P(x)}=0_p\Big\},
	\end{align*}
	and the bilateral ideal
	\begin{align*}
				\mathbb I :=(x-x_1)^{\kappa^{(1)}}\cdots (x-x_q)^{\kappa^{(q)}}\mathbb C^{p\times p}[x].
				\end{align*}
The corresponding	right and left
	orthogonal complements with respect to the sesquilinear form are
	\begin{align*}
\big(	\operatorname{Ker}^R_\beta\big)^{\perp_u^R}&:=\Big\{
Q(x)\in\mathbb C^{p\times p}[x]: \prodint{P(x),Q(y)}_u=0_p, P(x)\in\operatorname{Ker}^R_\beta\Big\},
\\
\mathbb I^{\perp_u^R}&:=\Big\{
Q(y)\in\mathbb C^{p\times p}[y]: \prodint{Q(x),P(y)}_u=0_p, P(x)\in\mathbb I\Big\}.
	\end{align*}
\end{defi}

Given a block matrix $M=\begin{bNiceMatrix}[small]
	A & B\\
	C &D
\end{bNiceMatrix}$
%\begin{align*}
%	M=\begin{bNiceArray}{ccw{c}{1cm}cc|cw{c}{.5cm}c}[margin]
%		\Block{5-5}<\Large>{A} & & & &&\Block{5-3}<\large>{B}& &\\
%		 & & & &&& &\\
%		&&&&&& &\\
%			& & & & &&& \\
%		&&&&&&&\\
%		\hline
%\Block{3-5}<\large>{C} & & & &  &\Block{3-3}<>{D}& &\\
%			& & & & &&& \\
%&&&&&&&\\
%	\end{bNiceArray}
%\end{align*}
with  $A\in\C^{N\times N}$, $B\in \C^{N\times M} $,$C\in\C^{M\times N}$, $D\in \C^{M\times M} $ and $ \det A\neq 0$ we define the quasi-determinant
$\Theta_*(M):= D-B A^{-1} C$.

\begin{teo}[Christoffel--Uvarov formulas]\label{teo:uvarov}
If either one of the two conditions $\big(	\operatorname{Ker}^R_\beta\big)^{\perp_u^R}=\{0_p\}$ or $\mathbb I^{\perp_u^L}=\{0_p\}$ is satisfied, then the matrix $I_{Np}+\prodint{\mathcal  J^{[0,1]}_{K_{n}}(x),(\beta)_x}$ is nonsingular, and the Uvarov perturbed matrix orthogonal polynomials and $H$'s matrices can be expressed as follows:
	\begin{align*}
	\hat P^{[1]}_n(x)&=\Theta_*\begin{bmatrix}
	I_{Np}+\prodint{\mathcal  J^{[0,1]}_{K_{n-1}}(x),(\beta)_x}&\mathcal  J^{[0,1]}_{K_{n-1}}(x)\\
\prodint{ P_n^{[1]}(x),(\beta)_x}& P_n^{[1]}(x)
	\end{bmatrix}, \\
	(\hat P^{[2]}_n(y))^\top&=\Theta_*\begin{bmatrix}
I_{Np}+\prodint{\mathcal  J^{[0,1]}_{K_{n-1}}(x),(\beta)_x}&\big(\mathcal J_{ P^{[2]}_{n}}\big)^\top\\
\prodint{K_{n-1}(x,y),(\beta)_x}	& (P_n^{[2]}(y ))^\top
	\end{bmatrix},\\
	\hat H_n&=\Theta_*\begin{bmatrix}
	I_{Np}+\prodint{\mathcal  J^{[0,1]}_{K_{n-1}}(x),(\beta)_x}&- (\mathcal J_{P^{[2]}_n}  )^\top\\
\prodint{ P_n^{[1]}(x),(\beta)_x}& H_n
	\end{bmatrix}.
	\end{align*}
\end{teo}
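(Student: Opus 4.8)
The plan is to convert the two implicit identities of Proposition~\ref{pro:additive} into an explicit linear system by feeding them through Proposition~\ref{pro:uvarov0}, and then to read off the quasideterminants. Write $\mathcal M_{n-1}:=I_{Np}+\prodint{\mathcal J^{[0,1]}_{K_{n-1}}(x),(\beta)_x}$. Combining Proposition~\ref{pro:additive} with the first identity of Proposition~\ref{pro:uvarov0} gives
\begin{align*}
\hat P^{[1]}_n(x)&=P^{[1]}_n(x)-\prodint{\hat P^{[1]}_n(x),(\beta)_x}\,\mathcal J^{[0,1]}_{K_{n-1}}(x),
\end{align*}
so that the only unknown is the $p\times Np$ block $\prodint{\hat P^{[1]}_n(x),(\beta)_x}$; analogously, the second family satisfies $(\hat P^{[2]}_n(y))^\top=(P^{[2]}_n(y))^\top-\prodint{K_{n-1}(x,y),(\beta)_x}(\mathcal J_{\hat P^{[2]}_n})^\top$, whose unknown is $(\mathcal J_{\hat P^{[2]}_n})^\top$.

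First I would close the first equation on itself by applying $\prodint{\,\cdot\,,(\beta)_x}$ to both sides, which yields $\prodint{\hat P^{[1]}_n(x),(\beta)_x}\,\mathcal M_{n-1}=\prodint{P^{[1]}_n(x),(\beta)_x}$. For the second family I would instead take the spectral jet in the free variable; the one routine point to record is that forming the jet commutes with the pairing against $(\beta)_x$ (the jet differentiates in $y$ and evaluates at the $x_j$, while $\prodint{\,\cdot\,,(\beta)_x}$ acts in $x$), which turns the jet of $\prodint{K_{n-1}(x,y),(\beta)_x}$ into $\prodint{\mathcal J^{[0,1]}_{K_{n-1}}(x),(\beta)_x}$ and produces $\mathcal M_{n-1}(\mathcal J_{\hat P^{[2]}_n})^\top=(\mathcal J_{P^{[2]}_n})^\top$. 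Granting that $\mathcal M_{n-1}$ is invertible, I would solve both systems, substitute back, and recognize the results as $\Theta_*$ of the first two block matrices through $\Theta_*(M)=D-BA^{-1}C$. For $\hat H_n$ I would start from $\hat H_n=H_n+\prodint{\hat P^{[1]}_n(x),P^{[2]}_n(y)}_v$, evaluate the $v$-form as $\prodint{\hat P^{[1]}_n(x),(\beta)_x}(\mathcal J_{P^{[2]}_n})^\top$, and insert the already computed $\prodint{\hat P^{[1]}_n(x),(\beta)_x}=\prodint{P^{[1]}_n(x),(\beta)_x}\mathcal M_{n-1}^{-1}$; the sign in the $(1,2)$ block of the third quasideterminant is exactly what converts the resulting $+$ into the $\Theta_*$ pattern.

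The genuine obstacle is the nonsingularity of $\mathcal M_n$ under the stated alternative. I would argue by contraposition: if $\mathcal M_n$ is singular then, being a square matrix over $\mathbb C$, it has both a nontrivial right null block and a nontrivial left null block. From a left null block $W$ (so $W\mathcal M_n=0$) I would form the degree-$\le n$ polynomial $Q(x):=W\,\mathcal J^{[0,1]}_{K_n}(x)$, and from a right null block a companion polynomial built out of $\prodint{K_n(x,y),(\beta)_x}$. The null relation forces $\prodint{Q(x),(\beta)_x}=-W\neq0$, so both polynomials are nonzero, while the reproducing property \eqref{eq:reproducing} of $K_n$ together with the defining orthogonality relations should show that one of them lies in $\big(\operatorname{Ker}^R_\beta\big)^{\perp_u^R}$ and the other in $\mathbb I^{\perp_u^L}$ of Definition~\ref{def:kernels}. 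Since the hypothesis asserts that at least one of these complements equals $\{0_p\}$, this is a contradiction, whence $\mathcal M_n$ is invertible for every $n$, in particular for $n-1$.

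I expect this last step to carry the real work: pinning down precisely which combination of $K_n$ and the functionals $(\beta)_x$ lands in each one-sided complement, and matching the transpose and left/right conventions of the sesquilinear form so that the reproducing identity and the orthogonality relations apply cleanly. By contrast, the first two paragraphs are essentially bookkeeping once the commutation of the jet with the pairing against $(\beta)_x$ has been recorded.
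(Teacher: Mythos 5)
Your first two paragraphs reproduce the paper's own derivation essentially verbatim: the paper forms the same two linear systems by pairing the identities of Proposition~\ref{pro:additive}/Proposition~\ref{pro:uvarov0} with $(\beta)_x$ and by taking the spectral jet, inverts $\mathcal M_{n-1}:=I_{Np}+\prodint{\mathcal J^{[0,1]}_{K_{n-1}}(x),(\beta)_x}$, substitutes back, and reads off the three $\Theta_*$ expressions, including the sign in the $\hat H_n$ block. The problem is the nonsingularity step, which you correctly identify as the real content but then misjudge as convention-matching bookkeeping.

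Your contraposition has the right skeleton (the paper also extracts a right null vector $X$ and a left null covector $Y$ and plays each against one of the two alternative hypotheses), and your polynomials $Q(x)=W\mathcal J^{[0,1]}_{K_n}(x)$ and $\prodint{K_n(x,y),(\beta)_x}X$ are exactly concrete realizations of the paper's $Q_Y$ and $Q_X$. The gap is that the claimed memberships do \emph{not} follow from the level-$n$ null relation plus the reproducing property \eqref{eq:reproducing}. Membership in $\mathbb I^{\perp_u^L}$ requires $\prodint{Q(x),P(y)}_u=0_p$ for \emph{every} $P\in\mathbb I$, of arbitrary degree, whereas \eqref{eq:reproducing} only controls the projection onto degrees $\le n$. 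Concretely, writing $P=\sum_kC_kP^{[2]}_k$ and $\pi_nP:=\sum_{k\le n}C_kP^{[2]}_k$, the reproducing property gives
\begin{align}
\prodint{W\mathcal J^{[0,1]}_{K_n}(x),P(y)}_u=W\big(\mathcal J_{\pi_nP}\big)^\top=-\sum_{k>n}W\big(\mathcal J_{P^{[2]}_k}\big)^\top C_k^\top,
\end{align}
where the last equality uses $\mathcal J_P=0$ for $P\in\mathbb I$; nothing in $W\mathcal M_n=0$ controls the tail terms $W\big(\mathcal J_{P^{[2]}_k}\big)^\top$ for $k>n$, and the same failure occurs for your companion polynomial tested against high-degree elements of $\operatorname{Ker}^R_\beta$. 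The missing idea is the paper's induction: from the consistency relations $\prodint{\hat P^{[1]}_k(x),(\beta)_x}\mathcal M_{k-1}=\prodint{P^{[1]}_k(x),(\beta)_x}$ and $\mathcal M_{k-1}\big(\mathcal J_{\hat P^{[2]}_k}\big)^\top=\big(\mathcal J_{P^{[2]}_k}\big)^\top$ --- which are available for every $k$ precisely because $\hat u$ is assumed quasidefinite, so the perturbed polynomials exist in every degree --- together with the Christoffel--Darboux recursion $\mathcal M_k=\mathcal M_{k-1}+\big(\mathcal J_{P^{[2]}_k}\big)^\top H_k^{-1}\prodint{P^{[1]}_k(x),(\beta)_x}$, one propagates $\mathcal M_kX=0$, $Y\mathcal M_k=0$, $\prodint{P^{[1]}_k(x),(\beta)_x}X=0$ and $Y\big(\mathcal J_{P^{[2]}_k}\big)^\top=0$ to all $k\ge n$. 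Only these infinitely many conditions kill the tails above and yield the memberships (the paper packages this via duality and the Schwartz kernel theorem, writing $\beta\cdot X=\mathcal L_u(Q_X)$ and $Y\cdot\delta=\mathcal L_u'(Q_Y)$ with degrees $\le n-1$). So your argument as sketched would fail at the membership step, and it also never invokes the standing quasidefiniteness of $\hat u$, which this induction shows to be an essential hypothesis rather than a convenience.
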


\begin{proof}
From Propositions \ref{pro:additive} and  \ref{pro:uvarov0} we get
\begin{align}\label{eq:uvarov11}
\hat P^{[1]}_{n}(z)&=
P^{[1]}_{n}(z)-\prodint{\hat P_n^{[1]}(x),(\beta)_x}\mathcal  J^{[0,1]}_{K_{n-1}}(z),\\
(\hat P^{[2]}_{n}(z))^\top&=(P^{[2]}_{n}(z))^\top-
\prodint{K_{n-1}(x,z),(\beta)_x}\big(\mathcal  J_{\hat P^{[2]}_n}\big)^\top,
\label{eq:uvarov12}
\end{align}
%which in turn  imply
%\begin{align}
%\prodint{\hat P^{[1]}_{n}(x),(\beta)_x}&=
%\prodint{ P^{[1]}_{n}(x),(\beta)_x}-\prodint{\hat P_n^{[1]}(x),(\beta)_x}\prodint{\mathcal  J^{[0,1]}_{K_{n-1}}(x),(\beta)_x},\\
%\big(\mathcal J_{\hat P^{[2]}_{n}}\big)^\top&=\big(\mathcal J_{ P^{[2]}_{n}}\big)^\top
%-\prodint{\mathcal J_{K_{n-1}}^{[0,1]},(\beta)_x)}\big(\mathcal  J_{\hat P^{[2]}_n}\big)^\top,\label{eq:nonsing2uvarov1}
%\end{align}
and, as a consequence,
\begin{align}
\label{eq:nonsing1uvarov}
\prodint{\hat P^{[1]}_{n}(x),(\beta)_x}\Big(I_{Np}+\prodint{\mathcal  J^{[0,1]}_{K_{n-1}}(x),(\beta)_x}\Big)&=
\prodint{ P^{[1]}_{n}(x),(\beta)_x},\\\label{eq:nonsing2uvarov}
\Big(I_{Np}+\prodint{\mathcal  J^{[0,1]}_{K_{n-1}},(\beta)_x}\Big)\big(\mathcal  J_{\hat P^{[2]}_n}\big)^\top
&=\big(\mathcal  J_{ P^{[2]}_n}\big)^\top.
\end{align}

Let us check the nonsingularity of  the matrices $I_{Np}+\prodint{\mathcal  J^{[0,1]}_{K_{n-1}}(x),(\beta)_x}$.  If we assume the contrary, then we can find a nonzero vector $ X\in \mathbb C^{Np}$ such that
\begin{align*}
\Big(I_{Np}+\prodint{\mathcal  J^{[0,1]}_{K_{n-1}}(x),(\beta)_x}\Big) X=0,
\end{align*}
is the zero vector in $\mathbb C^{Np}$, and, equivalently,  a covector $Y\in(\mathbb C^{Np})^*$ with
\begin{align*}
Y\Big(I_{Np}+\prodint{\mathcal  J^{[0,1]}_{K_{n-1}}(x),(\beta)_x}\Big)=0.
\end{align*}

Thus, using \eqref{eq:nonsing1uvarov} and \eqref{eq:nonsing2uvarov} we conclude that $\prodint{ P^{[1]}_{n}(x),(\beta)_x}X=0$ and $Y\big(\mathcal  J_{\hat P^{[2]}_n}\big)^\top=0$. The definition of the Christoffel--Darboux kernels \eqref{eq:CD kernel} implies
\begin{multline*}
I_{Np}+\prodint{\mathcal  J^{[0,1]}_{K_{n}}(x),(\beta)_x}=I_{Np}+\prodint{\mathcal  J^{[0,1]}_{K_{n-1}}(x),(\beta)_x}\\+
(\mathcal J_{P_n^{[2]}})^\top
(H_n)^{-1}
\prodint{ P^{[1]}_{n}(x),(\beta)_x},
\end{multline*}
and, consequently, we deduce that $\Big(I_{Np}+\prodint{\mathcal  J^{[0,1]}_{K_{n}}(x),(\beta)_x}\Big)X=0$ and $Y\Big(I_{Np}+\prodint{\mathcal  J^{[0,1]}_{K_{n}}(x),(\beta)_x}\Big)=0$, so that $\prodint{ P^{[1]}_{n+1}(x),(\beta)_x}X=0$ and $Y(\mathcal J_{P_{n+1}^{[2]}})^\top=0$, and so forth and so on.
Thus, we deduce that $\prodint{ P^{[1]}_{k}(x),(\beta)_x}X=0$ and $Y(\mathcal J_{P_k^{[2]}})^\top=0$ for $k\in\{n,n+1,\dots\}$ . If we write
\begin{align*}
X&=\begin{bmatrix}
X^{(1)}_{0}&
\cdots&
X^{(1)}_{\kappa^{(1)-1}}&
\cdots&
X^{(q)}_{0}&
\cdots&
X^{(q)}_{\kappa^{(q)-1}}
\end{bmatrix}^\top,\\
Y&=\begin{bmatrix}
Y^{(1)}_{0}&\Cdots&
Y^{(1)}_{\kappa^{(1)-1}}&\Cdots&
Y^{(q)}_{0}&\Cdots&
Y^{(q)}_{\kappa^{(q)-1}}
\end{bmatrix},
\end{align*}
 then the linear functionals
\begin{align*}
\beta\cdot X:=\sum_{j=1}^q\sum_{m=0}^{\kappa^{(j)}-1}\big(\beta^{(j)}_m\big)_xX^{(j)}_m,&
Y\cdot\delta:=\sum_{j=1}^q\sum_{m=0}^{\kappa^{(j)}-1}Y^{(j)}_m\frac{(-1)^m}{m!}\delta^{(m)}(y-x_j),
\end{align*}
are  such that
$\langle P^{[1]}_k(x),\beta\cdot X\rangle=0$ and $\prodint{Y\cdot\delta, \big(P^{[2]}_{k}(x)\big)^\top}=0$,
for $k\in\{n,n+1,\dots\}$. We can say that
\begin{align*}
\beta\cdot X&\in \big(\big\{P^{[1]}_k(x)\big\}_{k=n}^\infty\big)^{\perp_R}, &
%\coloneq\Big\{\tilde u, \text{ a  matrix of linear functionals such that }  \prodint{P_k^{[1]}(x), \tilde u}=0_p: k\geq n\Big\},\\
Y\cdot \delta&\in \Big(\Big\{\big(P^{[2]}_k(x)\big)^\top\Big\}_{k=n}^\infty\Big)^{\perp_L}.
%\coloneq\Big\{\tilde u, \text{ a  matrix of linear functionals such that }  \prodint{\tilde u, (P_k^{[2]}(x))^\top}=0_p: k\geq n\Big\}.
\end{align*}
It is convenient at this point to  recall that  the  topological and algebraic duals of the set of matrix polynomials coincide, i.e., $(\mathbb C^{p\times p}[x])'=(\mathbb C^{p\times p}[x])^*=\mathbb C^{p\times p}[\![x]\!]$, where we understand the set of matrix polynomials or matrix formal series as left or right modules over the ring of matrices. We also recall that the module of matrix polynomials of  degree less than or equal to $m$ is a the free module of rank $m+1$.  Therefore, for each positive integer $m$, we consider the  basis in the above dual space given by the following set of
matrices of linear functionals $\big\{(P^{[1]}_k)^*\big\}_{k=0}^m$,  dual to $\big\{P^{[1]}_k(x)\big\}_{k=0}^m.$
$ \prodint{P^{[1]}_k(x), (P^{[1]}_l)^*}=\delta_{k,l} I_p.$ As a consequence,
any linear functional can be written
$\sum\limits_{k=0}^m (P^{[1]}_l)^* C_k$,
where $C_k\in\mathbb C^{p\times p}$.  Then, $\big(\big\{P^{[1]}_k(x)\big\}_{k=n}^\infty\big)^{\perp_R}=\big\{(P^{[1]}_k)^*\big\}_{k=0}^{n-1}\mathbb C^{p\times p}\cong  \big(\mathbb C^{p\times p}\big)^n$ and, therefore, is a free right module of rank $m+1$.
But,  according to \eqref{eq:biorthogonal} and Schwartz kernel theorem, for
the set of matrices of linear functionals $\big\{\mathcal L_u\big(P^{[2]}_k(y)\big)\big\}_{k=0}^{n-1}\subseteq \big(\big\{P^{[1]}_k(x)\big\}_{k=n}^\infty\big)^{\perp_R}=\big\{(P^{[1]}_k)^*\big\}_{k=0}^{n-1}\mathbb C^{p\times p}$, and we conclude
$\big\{\mathcal L_u\big(P^{[2]}_k(x)\big)\big\}_{k=0}^{n-1}\mathbb C^{p\times p}= \big(\big\{P^{[1]}_k(x)\big\}_{k=n}^\infty\big)^{\perp_R}$.
Thus, we can write $\beta\cdot X=\mathcal L_u \big(Q_X(x)\big)$, where $Q_X(x)$ is  a matrix polynomial with degree $\deg (Q_X(x))\leq n-1$. A similar argument leads us to write $Y\cdot\delta=\mathcal L_u' \big(_YQ(x)\big)$, where $\mathcal L_u'$ denotes the transpose operator of $\mathcal L_u$ --see \cite{Schwartz1}--   and $Q_Y(x)$ is  a matrix polynomial with degree $\deg (Q_Y(x))\leq n-1$.
For
\begin{align*}
 P(x)&\in \operatorname{Ker}^R(\beta\cdot X):=\{P(x)\in\mathbb C^{p\times p}[x]: \prodint{\beta_X,P(x)}=0\},
	\\
 P(y)&\in \operatorname{Ker}^L(Y\cdot \delta):=\{P(y)\in\mathbb C^{p\times p}[y]: \prodint{P(y),Y\cdot\delta}=0\},
\end{align*}
we  have
$\big\langle\mathcal L_u \big(Q_X(y)\big),P(x)\big\rangle=0$ and $\big\langle P(y), \mathcal L_u'\big(Q_Y(x)\big)\big\rangle=0$,
and, for $ P(x)\in\operatorname{Ker}^R(\beta\cdot X), P(y)\in\operatorname{Ker}^L(Y\cdot\delta)$, the Schwartz kernel theorem gives
$\langle P(x), Q_X(y) \rangle_u=0$ and $\langle Q_Y(x) ,P(y)\rangle_u=0$,
which in turn implies
$Q_X(x)\in\big(\operatorname{Ker}^R(\beta\cdot X)\big))^{\perp_u^R}$ and $Q_Y(x)\in\big(\operatorname{Ker}^L(Y\cdot \delta)\big))^{\perp_u^L}$.
Notice that $\operatorname{Ker}^R_\beta\subset \operatorname{Ker}^R(\beta\cdot X)$
and, consequently,
$(\operatorname{Ker}^R_\beta)^{\perp^R_u}\supset (\operatorname{Ker}^R(\beta\cdot X))^{\perp^R_u}$. But we have assumed that $\big(\operatorname{Ker}^R_\beta\big)^{\perp_u^R}=\{0_p\}$, so that $Q_X(x)=0_p$ and  $\beta\cdot X=0_p$. Consequently, since the set $\big\{\big(\beta^{(j)}_m\big)_x\big\}_{\substack{j=1,\dots,q\\
		m=0,\dots,\kappa^{(j)}-1}}$ is linearly independent in the bimodule $\big(\mathbb C^{p\times p}[x]\big)'$, we deduce that $X=0$, which contradicts  the initial assumption.
		Observe that
		$\mathbb I \subset \operatorname{Ker}^L(Y\cdot\delta)$ so that $\big(\operatorname{Ker}^L(Y\cdot\delta)\big)^{\perp_u^L}\subset \mathbb I^{\perp_u^L}$. If we assume $\mathbb I^{\perp_u^L}=\{0_p\},$ then we get $Q_Y(x)=0_p$ and $Y=0$, in contradiction with the initial assumption and the matrix is
		$I_{Np}+\prodint{\mathcal  J^{[0,1]}_{K_{n-1}}(x),(\beta)_x}$. This also implies that $Y=0$

Hence, the matrix  $I_{Np}+\prodint{\mathcal  J^{[0,1]}_{K_{n}}(x),(\beta)_x}$ is nonsingular, allowing us to clean \eqref{eq:nonsing1uvarov} and \eqref{eq:nonsing2uvarov} and get
\begin{align*}
\prodint{\hat P^{[1]}_{n}(x),(\beta)_x}&=
\prodint{ P^{[1]}_{n}(x),(\beta)_x}\Big(I_{Np}+\prodint{\mathcal  J^{[0,1]}_{K_{n-1}}(x),(\beta)_x}\Big)^{-1},\\
\big(\mathcal  J_{ \hat P^{[2]}_n}\big)^\top
&=\Big(I_{Np}+\prodint{\mathcal  J^{[0,1]}_{K_{n-1}}(x),(\beta)_x}\Big)^{-1}\big(\mathcal  J_{ P^{[2]}_n}\big)^\top.
\end{align*}
These relations, when introduced in \eqref{eq:uvarov11} and \eqref{eq:uvarov12}, yield
{\small\begin{align*}
\hat P^{[1]}_{n}(x)&=
P^{[1]}_{n}(x)-\prodint{ P^{[1]}_{n}(x),(\beta)_x}\Big(I_{Np}+\prodint{\mathcal  J^{[0,1]}_{K_{n-1}}(x),(\beta)_x}\Big)^{-1}\mathcal  J^{[0,1]}_{K_{n-1}}(x),\\
(\hat P^{[2]}_{n}(y))^\top&=(P^{[2]}_{n}(y))^\top-
\prodint{K_{n-1}(x,y),(\beta)_x}
\Big(I_{Np}+\prodint{\mathcal  J^{[0,1]}_{K_{n-1}}(x),(\beta)_x}\Big)^{-1}\big(\mathcal  J_{ P^{[2]}_n}\big)^\top,
\end{align*}}
and the result follows.
To complete the proof notice that, see Proposition  \ref{pro:additive},
\begin{align*}
\hat H_n&=H_n+\prodint{\hat P^{[1]}_n(x),\beta }\big(\mathcal J_{P_n^{[2]}}\big)^\top\\
&=H_n+\prodint{ P^{[1]}_{n}(x),(\beta)_x}\Big(I_{Np}+\prodint{\mathcal  J^{[0,1]}_{K_{n-1}}(x),(\beta)_x}\Big)^{-1}\big(\mathcal J_{P_n^{[2]}}\big)^\top.
\end{align*}
\end{proof}

\subsection{Applications}
We discuss two particular cases of the general additive Uvarov perturbation presented above.

\subsubsection{Total derivatives}
We take the perturbation, which is supported by the diagonal $y=x$, in the following way
\begin{align}\label{eq:v_uvarov_diagonal_I}
v_{x,x}&=\sum_{j=1}^q\sum_{m=0}^{\kappa^{(j)}-1}\frac{(-1)^m}{m!}\beta^{(j)}_m\delta^{(m)}(x-x_j), & \beta^{(j)}_m&\in\mathbb C^{p\times p}.
\end{align}

\begin{pro}\label{pro:diagonal_masses}
	The discrete Hankel  mass terms \eqref{eq:v_uvarov_diagonal_I} are particular cases of  \eqref{eq:v_uvarov_general}  with
	\begin{align*}
	\big(\beta^{(j)}_{k}\big)_x=\sum_{n=0}^{\kappa^{(j)}-1-k}(-1)^{n}\frac{\beta^{(j)}_{k+n}}{(n)!}\delta^{(n)}(x-x_j).
	\end{align*}
\end{pro}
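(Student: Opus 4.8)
The plan is to show that the univariate diagonal perturbation \eqref{eq:v_uvarov_diagonal_I} and the bivariate perturbation \eqref{eq:v_uvarov_general}, evaluated with the proposed functionals $\big(\beta^{(j)}_k\big)_x$, induce exactly the same sesquilinear form. Since a continuous sesquilinear form is determined by its values on tensor products $P(x)\otimes Q(y)$ of matrix polynomials, it suffices to verify $\prodint{P(x),Q(y)}_{v_{x,x}}=\prodint{P(x),Q(y)}_{v_{x,y}}$ for all $P,Q\in\C^{p\times p}[x]$. As the nodes $\{x_j\}_{j=1}^q$ are pairwise distinct and both sides are linear in the masses, I would fix a single node, suppress the superscript $j$, and check the identity at one point $x_j$ with multiplicity $\kappa=\kappa^{(j)}$.

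For the left-hand side, the diagonal mass is tested on the diagonal $y=x$, so the basic ingredient is $\big\langle\delta^{(m)}(x-x_j),P(x)\beta_mQ(x)^\top\big\rangle=(-1)^m\big(P(x)\beta_mQ(x)^\top\big)^{(m)}\big|_{x_j}$, where the transpose on $Q$ is forced by Definition \ref{def:sesquilinear} and the matrix $\beta_m$ is seated between the two factors. Expanding with the Leibniz rule (the constant matrix $\beta_m$ is not differentiated) and using $\tfrac1{m!}\binom{m}{n}=\tfrac1{n!\,(m-n)!}$, while the sign $(-1)^m$ is cancelled by the one in \eqref{eq:v_uvarov_diagonal_I}, I obtain
\begin{align*}
\prodint{P(x),Q(y)}_{v_{x,x}}=\sum_{m=0}^{\kappa-1}\sum_{n=0}^{m}\frac{1}{n!\,(m-n)!}\,P^{(n)}(x_j)\,\beta_m\,\big(Q^{(m-n)}(x_j)\big)^\top.
\end{align*}
For the right-hand side, I would substitute $\big(\beta_l\big)_x=\sum_{n=0}^{\kappa-1-l}(-1)^n\tfrac{\beta_{l+n}}{n!}\delta^{(n)}(x-x_j)$ into \eqref{eq:v_uvarov_general}, following the same bookkeeping already carried out in Proposition \ref{pro:uvarov0}: the factor $\delta^{(l)}(y-x_j)$ produces $(-1)^lQ^{(l)}(x_j)^\top$ whose sign is absorbed by the prefactor $\tfrac{(-1)^l}{l!}$, and each $\delta^{(n)}(x-x_j)$ inside $\big(\beta_l\big)_x$ produces $(-1)^nP^{(n)}(x_j)$ whose sign is absorbed by the $(-1)^n$ in the definition. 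Contracting the matrix indices then seats $\beta_{n+l}$ between the $x$- and $y$-jets, yielding
\begin{align*}
\prodint{P(x),Q(y)}_{v_{x,y}}=\sum_{l=0}^{\kappa-1}\sum_{n=0}^{\kappa-1-l}\frac{1}{n!\,l!}\,P^{(n)}(x_j)\,\beta_{n+l}\,\big(Q^{(l)}(x_j)\big)^\top.
\end{align*}

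Finally I would identify the two double sums through the change of index $m=n+l$: the coefficient of $P^{(n)}(x_j)\,\beta_{n+l}\,(Q^{(l)}(x_j))^\top$ is $\tfrac1{n!\,l!}$ in both, and the domains $\{0\le m\le\kappa-1,\ 0\le n\le m\}$ and $\{0\le l\le\kappa-1,\ 0\le n\le\kappa-1-l\}$ both describe $\{(n,l):n,l\ge0,\ n+l\le\kappa-1\}$. The step most prone to error, and hence the one I would treat with the greatest care, is the simultaneous tracking of the three independent sources of signs, namely $(-1)^m$ in the diagonal mass, $(-1)^l$ in the bivariate mass, and $(-1)^n$ inside $\big(\beta_l\big)_x$, together with the transpose convention of Definition \ref{def:sesquilinear}, so that the matrix $\beta_{n+l}$ lands precisely between the $x$-jet of $P$ and the transposed $y$-jet of $Q$; once the signs and matrix orderings are settled, the combinatorial matching is immediate.
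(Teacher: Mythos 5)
Your proposal is correct. The paper states Proposition \ref{pro:diagonal_masses} without any proof (it is treated as a direct verification preceding the notational definitions that follow), and your computation --- expanding both sesquilinear forms via the delta-derivative pairing and the Leibniz rule, checking that all signs cancel against the $(-1)^m$, $(-1)^l$, $(-1)^n$ prefactors, and matching the two double sums over $\{(n,l):n,l\ge 0,\ n+l\le \kappa^{(j)}-1\}$ under the substitution $m=n+l$ --- is exactly the verification the paper omits, carried out with the correct matrix ordering $P^{(n)}(x_j)\,\beta^{(j)}_{n+l}\,\big(Q^{(l)}(x_j)\big)^\top$ dictated by Definition \ref{def:sesquilinear}.
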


To discuss this reduction it is convenient  to introduce some further notation.
\begin{defi}
	For the family of perturbation matrices $\beta^{(j)}_m\in\mathbb C^{p\times p}$, let us consider
{	\begin{align*}
\begin{aligned}
		\beta^{(j)}:=\begin{bNiceMatrix}[small]
	\beta^{(j)}_0& \beta^{(j)}_1& \beta^{(j)}_2& \cdots&&&& \beta^{(j)}_{\kappa^{(j)}-1}\\
	\beta^{(j)}_1&  \beta^{(j)}_2& &&&& \beta^{(j)}_{\kappa^{(j)}-1}&0_p\\
	\beta^{(j)}_2& &&&& \beta^{(j)}_{\kappa^{(j)}-1}&\Iddots[shorten-end=-3pt,shorten-start=-2pt]&\vdots\\
	\\\vdots&  & & \Iddots[shorten-end=-12pt]& &  &&
	\\&  & & & &  &&
	\\&  & & & &  &&
	\\
	\beta^{(j)}_{\kappa^{(j)}-1}&0_p&\cdots&&&&& 0_p
	\end{bNiceMatrix}\in\mathbb C^{\kappa^{(j)}p\times \kappa^{(j)}p}
\end{aligned}
	\end{align*}}
	and if $N:=\kappa^{(1)}+\dots+\kappa^{(q)}$, then  we introduce
	\begin{align*}
	\beta=\diag(\beta^{(1)},\dots,\beta^{(q)})\in\mathbb C^{ Np\times Np}.
	\end{align*}
		We also introduce some additional jets. First a spectral jet with respect to the first variable
		{	\scriptsize	\begin{align*}
				\mathcal J^{[1,0]}_K(y)=\begin{bmatrix}
				K(x_1,y)&\dots&\dfrac{(K(x,y))_{x_1,y}^{(\kappa^{(1)}-1,0)}}{(\kappa^{(1)}-1)!}&\dots&K(x_q,y)&\dots&\dfrac{(K(x,y))_{x_q,y}^{(\kappa^{(q)}-1,0)}}{(\kappa^{(q)}-1)!}
				\end{bmatrix}\in\mathbb C^{p\times Np},
				\end{align*}		}
		and also a  double spectral jet of a matrix kernel
	{\scriptsize	\begin{align*}
	\hspace*{-2cm}	\begin{aligned}
			\mathcal J_K=\begin{bmatrix}
		K(x_1,x_1)&\cdots&\dfrac{(K(x,y))_{x_1,x_1}^{(\kappa^{(1)}-1,0)}}{(\kappa^{(1)}-1)!}&\cdots&K(x_q,x_1)&\cdots&\dfrac{(K(x,y))_{x_q,x_1}^{(\kappa^{(q)}-1,0)}}{(\kappa^{(q)}-1)!}
		\\
		\vdots&&\vdots&&\vdots& &\vdots \\
		\dfrac{(K(x,y))_{x_1,x_1}^{(0,\kappa^{(1)}-1)}}{(\kappa^{(1)}-1)!}&\cdots &\dfrac{(K(x,y))_{x_1,x_1}^{(\kappa^{(1)}-1,\kappa^{(1)}-1)}}{(\kappa^{(1)}-1)!(\kappa^{(1)}-1)!}&\cdots&	\dfrac{(K(x,y))_{x_q,x_1}^{(0,\kappa^{(1)}-1)}}{(\kappa^{(1)}-1)!}&\cdots &\dfrac{(K(x,y))_{x_1,x_q}^{(\kappa^{(q)}-1,\kappa^{(1)}-1)}}{(\kappa^{(q)}-1)!(\kappa^{(1)}-1)!}	\\
		\vdots&&\vdots&&\vdots& &\vdots \\
		K(x_1,x_q)&\cdots&\dfrac{(K(x,y))_{x_1,x_q}^{(\kappa^{(1)}-1,0)}}{(\kappa^{(1)}-1)!}&\cdots&K(x_q,x_q)&\cdots&\dfrac{(K(x,y))_{x_q,x_q}^{(\kappa^{(q)}-1,0)}}{(\kappa^{(q)}-1)!}\\
		\vdots&&\vdots&&\vdots& &\vdots \\
		\dfrac{(K(x,y))_{x_1,x_q}^{(0,\kappa^{(q)}-1)}}{(\kappa^{(q)}-1)!}&\cdots &\dfrac{(K(x,y))_{x_1,x_q}^{(\kappa^{(1)}-1,\kappa^{(q)}-1)}}{(\kappa^{(1)}-1)!(\kappa^{(q)}-1)!}&\cdots&	\dfrac{(K(x,y))_{x_q,x_q}^{(0,\kappa^{(q)}-1)}}{(\kappa^{(q)}-1)!}&\cdots &\dfrac{(K(x,y))_{x_q,x_q}^{(\kappa^{(q)}-1,\kappa^{(q)}-1)}}{(\kappa^{(q)}-1)!(\kappa^{(q)}-1)!}
		\end{bmatrix},
		\end{aligned}
		\end{align*}}
		which belongs to $\mathbb C^{Np\times Np}$.
		We have employed the compact notation
		\begin{align*}
		(K(x,y))^{(n,m)}_{a,b}=\frac{\partial^{n+m} K}{\partial x^n\partial y^m}\Big|_{x=a,y=b}.
		\end{align*}
\end{defi}

\begin{pro}	
When  the mass term $v_{x,y}$ is as \eqref{eq:v_uvarov_diagonal_I} the triviality of kernel subspace
$\operatorname{Ker}^R_\beta=\{0_p\}$
	is ensured whenever
$	\mathbb I^{\perp_u^R}=\{0_p\}$.
	Thus, the quasideterminantal expressions of Theorem \ref{teo:uvarov} hold whenever $\mathbb I^{\perp_u^L}=\mathbb I^{\perp_u^R}=\{0_p\}$.
	Moreover, if the generalized kernel $u_{x,y}$ is of Hankel type, then  the quasidefiniteness of $u$ ensures the trivially  of the left and right  orthogonal complements of the bilateral ideal $\mathbb I$.
\end{pro}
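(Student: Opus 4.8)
The plan is to dispatch the three assertions in the order stated, the crux being a single geometric inclusion together with the shift symmetry carried by a Hankel kernel. First I would record the inclusion $\mathbb{I}\subseteq\operatorname{Ker}^R_\beta$ in the diagonal situation \eqref{eq:v_uvarov_diagonal_I}. By Proposition \ref{pro:diagonal_masses} each functional $\big(\beta^{(j)}_m\big)_x$ is a linear combination of derivatives of Dirac deltas $\delta^{(n)}(x-x_j)$ with $0\le n\le\kappa^{(j)}-1$, so the pairing $\big\langle\big(\beta^{(j)}_m\big)_x,P(x)\big\rangle$ only sees the jet of $P$ at $x_j$ up to order $\kappa^{(j)}-1$. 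Since every $P(x)\in\mathbb{I}$ is divisible by $(x-x_j)^{\kappa^{(j)}}$, it vanishes together with its derivatives up to order $\kappa^{(j)}-1$ at each node $x_j$; hence all these pairings vanish and $P(x)\in\operatorname{Ker}^R_\beta$. (The restricted range of $m$ in Definition \ref{def:kernels} can only enlarge $\operatorname{Ker}^R_\beta$, so the inclusion is unaffected.)

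Granting this inclusion, the first assertion follows from the order-reversing behaviour of the orthogonal complement: from $\mathbb{I}\subseteq\operatorname{Ker}^R_\beta$ one gets $\big(\operatorname{Ker}^R_\beta\big)^{\perp_u^R}\subseteq\mathbb{I}^{\perp_u^R}$, so the hypothesis $\mathbb{I}^{\perp_u^R}=\{0_p\}$ forces the triviality of the orthogonal complement $\big(\operatorname{Ker}^R_\beta\big)^{\perp_u^R}=\{0_p\}$. The latter is precisely one of the two alternative hypotheses of Theorem \ref{teo:uvarov}, which therefore applies and delivers the quasideterminantal formulas; in particular, whenever both $\mathbb{I}^{\perp_u^R}=\{0_p\}$ and $\mathbb{I}^{\perp_u^L}=\{0_p\}$ hold the formulas are valid, the second condition being recorded for symmetry and to cover the second-kind family directly. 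This settles the second assertion.

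It remains to treat the Hankel reduction, which I expect to be the only genuinely analytic step. I would set $r(x)=(x-x_1)^{\kappa^{(1)}}\cdots(x-x_q)^{\kappa^{(q)}}$, a scalar polynomial, so that $\mathbb{I}=r(x)\mathbb{C}^{p\times p}[x]$. The Hankel hypothesis says the Gram blocks $G_{k,l}=\prodint{I_px^k,I_py^l}_u$ depend only on $k+l$, which is equivalent to the shift symmetry $\prodint{xP(x),Q(y)}_u=\prodint{P(x),yQ(y)}_u$; since $r$ is scalar this iterates to $\prodint{r(x)P(x),Q(y)}_u=\prodint{P(x),r(y)Q(y)}_u$. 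Now let $Q(y)\in\mathbb{I}^{\perp_u^R}$; every element of $\mathbb{I}$ has the form $r(y)R(y)$, so for all $R$ I would compute $0=\prodint{Q(x),r(y)R(y)}_u=\prodint{r(x)Q(x),R(y)}_u$. Quasidefiniteness makes the form nondegenerate, for if $\prodint{S(x),R(y)}_u=0$ for every $R$ then the invertibility of the truncated Gram matrices $G_{[k]}$ forces $S=0$; whence $r(x)Q(x)=0$ and, $r$ being a nonzero scalar polynomial, $Q=0$. Carrying out the same computation with the factor transferred to the left slot yields $\mathbb{I}^{\perp_u^L}=\{0_p\}$ as well.

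The delicate point throughout is this last step: one must transfer the scalar factor $r$ across the sesquilinear form using only the Hankel shift symmetry, observing that the transpose in the second slot is harmless precisely because $r$ is scalar, and then convert the abstract quasidefiniteness into the concrete nondegeneracy statement through the nonsingular truncations $G_{[k]}$. The first two assertions are, by comparison, purely formal once the inclusion $\mathbb{I}\subseteq\operatorname{Ker}^R_\beta$ is in place.
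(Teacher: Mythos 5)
Your proposal is correct and takes essentially the same route as the paper's own proof: the inclusion $\mathbb{I}\subseteq\operatorname{Ker}^R_\beta$ coming from the delta structure of the functionals $\big(\beta^{(j)}_m\big)_x$, the order-reversing property of the orthogonal complements to reduce to the hypotheses of Theorem \ref{teo:uvarov}, and, for the Hankel case, transferring the scalar factor $r(x)=(x-x_1)^{\kappa^{(1)}}\cdots(x-x_q)^{\kappa^{(q)}}$ across the sesquilinear form via the shift symmetry and then invoking quasidefiniteness to force the annihilating element to vanish. Your write-up only adds detail that the paper leaves implicit, namely the equivalence of the Hankel condition with the shift symmetry and the explicit derivation of nondegeneracy from the nonsingular truncations $G_{[k]}$.
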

\begin{proof}
For the diagonal case we  choose the mass terms as in Proposition \ref{pro:diagonal_masses}
\begin{align*}
\big(\beta^{(j)}_{k}\big)_x=\sum_{n=0}^{\kappa^{(j)}-1-k}(-1)^{n}\frac{\beta^{(j)}_{k+n}}{(n)!}\delta^{(n)}(x-x_j).
\end{align*}
Thus, $\mathbb I\subset\operatorname{Ker}^R_\beta$,
and, consequently, $
\mathbb I^{\perp_u^R}\supset \big(\operatorname{Ker}^R_\beta\big)^{\perp_u^R}$.
If $u_{x,y}$ is of Hankel type, then $P(x)\in \mathbb I^{\perp_u^R}$ if
$\prodint{
	P(x)(x-x_1)^{\kappa^{(1)}}\cdots (x-x_q)^{\kappa^{(q)}},x^nI_p}_u=0$,
for $n\in\{0,1,\dots\}.$ Since  $u$ is quasidefinite, then $P(x)=0_p$,and we get $\mathbb I^{\perp_u^R}=\{0_p\}$.
A similar argument leads to  the triviality $\mathbb I^{\perp_u^L}=\{0_p\}$.
\end{proof}

\begin{pro}
When  the mass term $v_{x,y}$ is as \eqref{eq:v_uvarov_diagonal_I} the perturbed matrix orthogonal polynomials and $H$ matrices are
\begin{align*}
\hat P^{[1]}_n(x)&=\Theta_*\begin{bmatrix}
I_{Np}+\beta \mathcal J_K & \beta \mathcal J^{[0,1]}_{K_{n-1}}(x)\\
\mathcal J_{P^{[1]}_n} & P_n^{[1]}(x)
\end{bmatrix}, \\
(\hat P^{[2]}_n(y))^\top&=\Theta_*\begin{bmatrix}
I_{Np}+ \beta\mathcal J_K  &\beta (\mathcal J_{P^{[2]}_n}  )^\top\\
\mathcal J^{[1,0]}_{K_{n-1}}(y)	 & (P_n^{[2]}(y ))^\top
\end{bmatrix},\\
\hat H_n&=\Theta_*\begin{bmatrix}
I_{Np}+ \beta\mathcal J_K &- \beta(\mathcal J_{P^{[2]}_n}  )^\top\\\
\mathcal J_{P^{[1]}_n} & H_n
\end{bmatrix}.
\end{align*}	
\end{pro}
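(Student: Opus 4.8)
The plan is to obtain these expressions as a direct specialization of Theorem~\ref{teo:uvarov}, once the diagonal functionals are rewritten through Proposition~\ref{pro:diagonal_masses}. Thus I would start from the explicit Schur-complement forms produced in the proof of Theorem~\ref{teo:uvarov}, namely
\begin{align*}
\hat P^{[1]}_{n}(x)&=P^{[1]}_{n}(x)-\prodint{ P^{[1]}_{n}(x),(\beta)_x}\Big(I_{Np}+\prodint{\mathcal  J^{[0,1]}_{K_{n-1}}(x),(\beta)_x}\Big)^{-1}\mathcal  J^{[0,1]}_{K_{n-1}}(x),
\end{align*}
together with the analogous ones for $(\hat P^{[2]}_n(y))^\top$ and $\hat H_n$, and substitute $\big(\beta^{(j)}_{k}\big)_x=\sum_{n}(-1)^{n}\frac{\beta^{(j)}_{k+n}}{n!}\delta^{(n)}(x-x_j)$ into every bracket of the form $\prodint{\,\cdot\,,(\beta)_x}$.

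The heart of the argument is the evaluation of these brackets. For a smooth matrix function $f$ the functional $\delta^{(n)}(x-x_j)$ reproduces, up to the sign $(-1)^n$, the Taylor coefficient $f^{(n)}(x_j)/n!$; this sign cancels against the $(-1)^n$ supplied by Proposition~\ref{pro:diagonal_masses}, while the block-Hankel pattern of $\beta^{(j)}$ (whose $(a,b)$ block is $\beta^{(j)}_{a+b}$) precisely encodes the index shift $k\mapsto k+n$. Carrying this out entrywise, I would establish the identities
\begin{align*}
\prodint{ P(x),(\beta)_x}&=\mathcal J_{P}\,\beta, & \prodint{\mathcal J^{[0,1]}_{K}(x),(\beta)_x}&=\mathcal J_{K}\,\beta, & \prodint{K(x,y),(\beta)_x}&=\mathcal J^{[1,0]}_{K}(y)\,\beta,
\end{align*}
valid for any matrix polynomial $P$ and matrix kernel $K$, where $\beta=\diag(\beta^{(1)},\dots,\beta^{(q)})$; here the transpose built into property~(2) of the sesquilinear form is what reconciles the block-Hankel $\beta$ appearing on the right with the form used in the statement. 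Applying these with $K=K_{n-1}$, $P=P_n^{[1]}$, and the double jet $\mathcal J_K=\mathcal J_{K_{n-1}}$ rewrites each pairing as a jet multiplied on the right by $\beta$.

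Finally, I would move $\beta$ to the left by the push-through identity $\beta\,(I_{Np}+\mathcal J\beta)^{-1}=(I_{Np}+\beta\mathcal J)^{-1}\beta$, applied with $\mathcal J=\mathcal J_{K_{n-1}}$. This simultaneously turns the inverted block $I_{Np}+\mathcal J_{K_{n-1}}\beta$ into $I_{Np}+\beta\mathcal J_{K_{n-1}}=I_{Np}+\beta\mathcal J_K$ and places the factor $\beta$ in front of $\mathcal J^{[0,1]}_{K_{n-1}}(x)$ and of $(\mathcal J_{P^{[2]}_n})^\top$, which is exactly the top block-row of the stated quasideterminants; the three formulas for $\hat P^{[1]}_n$, $(\hat P^{[2]}_n)^\top$ and $\hat H_n$ then follow by reading off $\Theta_*$. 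The nonsingularity of $I_{Np}+\beta\mathcal J_K$ needed for $\Theta_*$ to be defined is guaranteed by $\mathbb I^{\perp_u^L}=\mathbb I^{\perp_u^R}=\{0_p\}$ through the preceding Proposition. I expect the main obstacle to be the bookkeeping in the key lemma: one must keep the double indexing $(j,k)$ aligned with the block-Hankel structure of $\beta$ and track the transpose and sign conventions with care, since a single misplaced transpose would spoil the push-through step that converts right-multiplication by $\beta$ into the left-multiplication displayed in the final answer.
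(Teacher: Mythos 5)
Your proposal is correct and is essentially the proof the paper intends but leaves implicit: specialize Theorem~\ref{teo:uvarov} via Proposition~\ref{pro:diagonal_masses}, establish the identities $\prodint{P(x),(\beta)_x}=\mathcal J_{P}\,\beta$, $\prodint{\mathcal J^{[0,1]}_{K_{n-1}}(x),(\beta)_x}=\mathcal J_{K_{n-1}}\beta$, $\prodint{K_{n-1}(x,y),(\beta)_x}=\mathcal J^{[1,0]}_{K_{n-1}}(y)\,\beta$, and then push $\beta$ through the resolvent by $\beta\big(I_{Np}+\mathcal J_{K_{n-1}}\beta\big)^{-1}=\big(I_{Np}+\beta\mathcal J_{K_{n-1}}\big)^{-1}\beta$, which is exactly the pattern the paper spells out for the discrete-support case preceding Corollary~\ref{coro:Uvarov}. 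One minor correction to a justification: the Hankel $\beta$ lands to the right of the jets because of the paper's convention for the bracket $\prodint{\,\cdot\,,(\beta)_x}$ (as used in Proposition~\ref{pro:uvarov0}), not because of the transpose in property (2) of the sesquilinear form --- that transpose is what produces the jets $\big(\mathcal J_{P^{[2]}_n}\big)^\top$ already present in Theorem~\ref{teo:uvarov} --- but the identities you state are correct as written.
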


\subsubsection{Uvarov perturbations with finite discrete support}

We will assume that the matrices of generalized functions $(\beta^{(j)}_m)_x$ are supported on a discrete finite set, say $\{\tilde x_b\}_{b=1}^{\tilde q}$,
with multiplicities $\tilde\kappa^{(b)}$ such that $\tilde\kappa^{(1)}+\cdots+\tilde\kappa^{(\tilde q)}=\tilde N$,  so that
\begin{align}\label{eq:discrete_support}
(\beta^{(j)}_m)_x=\sum_{b=1}^{\tilde q}\sum_{l=0}^{\tilde \kappa^{(b)}-1}\beta^{(b,j)}_{l,m}\frac{(-1)^l}{l!}\delta^{(l)}(x-\tilde x_b).
\end{align}
For $\beta^{(b,j)}_{l,m}\in\mathbb C^{p\times p}$, the additive perturbation is
\begin{align*}
v_{x,y}&=\sum_{b=1}^{\tilde q}\sum_{l=0}^{\tilde \kappa^{(b)}-1}\sum_{a=1}^q\sum_{m=0}^{\kappa^{(a)}-1}\frac{(-1)^{l+m}}{l!m!}\beta^{(b,a)}_{l,m}\delta^{(l)}(x-\tilde x_b)\otimes\delta^{(m)}(y-x_a).
\end{align*}
This implies that there is a discrete support of the perturbing matrix of generalized functions, $\operatorname{supp}(v_{x,y})
=\{\tilde x_b\}_{b=1}^{\tilde q}\times  \{ x_j\}_{j=1}^{ q}$. The perturbed sesquilinear form  is
\begin{multline*}
\prodint{P(x),Q(y)}_{u+v}=\prodint{P(x),Q(y)}_u\\+
\sum_{b=1}^{\tilde q}\sum_{l=0}^{\tilde \kappa^{(b)}-1}\sum_{j=1}^q\sum_{m=0}^{\kappa^{(j)}-1}\frac{1}{l!m!}\big(P(x)\big)^{(l)}_{\tilde x_b}\beta^{(b,j)}_{l,m}\big((Q(y))^\top\big)^{(m)}_{x_j}.
\end{multline*}

\begin{defi}
	We introduce the following block rectangular matrix of couplings that belongs to $\in\mathbb C^{\tilde N p\times N p}$
{	\begin{align*}
\beta:=\begin{bmatrix}
\beta^{(0,0)}_{1,1}&\cdots &\beta^{(0,\kappa^{(1)}-1)}_{1,1}&\cdots&\beta^{(0,0)}_{1,q}&\cdots&\beta^{(0,\kappa^{(q)}-1)}_{1,q}\\
\vdots & & \vdots & & \vdots & & \vdots\\
\beta^{(\tilde \kappa^{(1)}-1,0)}_{1,1}&\cdots &\beta^{(\tilde \kappa^{(1)}-1,\kappa^{(1)}-1)}_{1,1}&\cdots&\beta^{(\tilde \kappa^{(1)}-1,0)}_{1,q}&\cdots&\beta^{(\tilde \kappa^{(1)}-1,\kappa^{(q)}-1)}_{1,q}\\
\vdots & & \vdots & & \vdots & & \vdots\\
\beta^{(0,0)}_{\tilde q,1}&\cdots &\beta^{(0,\kappa^{(1)}-1)}_{\tilde q,1}&\cdots&\beta^{(\tilde \kappa^{(1)}-1,0)}_{\tilde q,q}&\cdots&\beta^{(\tilde \kappa^{(1)}-1,\kappa^{(q)}-1)}_{\tilde q,q}\\
\vdots & & \vdots & & \vdots & & \vdots\\
\beta^{(\tilde\kappa^{(\tilde q)}-1,0)}_{\tilde q,1}&\cdots &\beta^{(\tilde\kappa^{(\tilde q)}-1,\kappa^{(1)}-1)}_{\tilde q,1}&\cdots&\beta^{(\tilde\kappa^{(\tilde q)}-1,0)}_{\tilde q,q}&\cdots&\beta^{(\tilde \kappa^{(\tilde q)}-1,\kappa^{(q)}-1)}_{\tilde q,q}
\end{bmatrix}
	\end{align*}}
	and,  given any matrix of kernels $K(x,y)$, we introduce the mixed double jet $\tilde{	\mathcal J}_K\in\mathbb C^{Np\times \tilde Np}$
{	\scriptsize\begin{align*}
\hspace*{-2cm}\begin{aligned}
	\tilde{	\mathcal J}_K:=\begin{bmatrix}
	K(\tilde x_1, x_1)&\cdots&\dfrac{(K(x,y))_{\tilde x_1, x_1}^{(\tilde  \kappa^{(1)}-1,0)}}{(\tilde \kappa^{(1)}-1)!}&\cdots&K(\tilde x_q, x_1)&\cdots&\dfrac{(K(x,y))_{\tilde  x_q,x_1}^{(\tilde \kappa^{(\tilde q)}-1,0)}}{(\tilde \kappa^{(\tilde q)}-1)!}
	\\
	\vdots&&\vdots&&\vdots& &\vdots \\
	\dfrac{(K(x,y))_{\tilde x_1,x_1}^{(0,\kappa^{(1)}-1)}}{(\kappa^{(1)}-1)!}&\cdots &\dfrac{(K(x,y))_{\tilde x_1,x_1}^{(\tilde \kappa^{(1)}-1,\kappa^{(1)}-1)}}{(\tilde \kappa^{(1)}-1)!(\kappa^{(1)}-1)!}&\cdots&	\dfrac{(K(x,y))_{\tilde x_q,x_1}^{(0,\kappa^{(1)}-1)}}{(\kappa^{(1)}-1)!}&\cdots &\dfrac{(K(x,y))_{\tilde  x_q,x_1}^{(\tilde  \kappa^{(\tilde q)}-1,\kappa^{(1)}-1)}}{(\tilde  \kappa^{(\tilde q)}-1)!(\kappa^{(1)}-1)!}	\\
	\vdots&&\vdots&&\vdots& &\vdots \\
	K(\tilde x_1,x_q)&\cdots&\dfrac{(K(x,y))_{\tilde x_1,x_q}^{(\tilde \kappa^{(1)}-1,0)}}{(\tilde \kappa^{(1)}-1)!}&\cdots&K(\tilde x_q,x_q)&\cdots&\dfrac{(K(x,y))_{\tilde x_q,x_q}^{(\tilde \kappa^{(\tilde q)}-1,0)}}{(\tilde \kappa^{(\tilde q)}-1)!}\\
	\vdots&&\vdots&&\vdots& &\vdots \\
	\dfrac{(K(x,y))_{\tilde x_1,x_q}^{(0,\kappa^{(q)}-1)}}{(\kappa^{(q)}-1)!}&\cdots &\dfrac{(K(x,y))_{\tilde  x_1,x_q}^{(\tilde \kappa^{(1)}-1,\kappa^{(q)}-1)}}{(\tilde  \kappa^{(1)}-1)!(\kappa^{(q)}-1)!}&\cdots&	\dfrac{(K(x,y))_{\tilde  x_q,x_q}^{(0,\kappa^{(q)}-1)}}{(\kappa^{(q)}-1)!}&\cdots &\dfrac{(K(x,y))_{\tilde x_q,x_q}^{(\tilde \kappa^{(\tilde q)}-1,\kappa^{(q)}-1)}}{(\tilde \kappa^{(\tilde q)}-1)!(\kappa^{(q)}-1)!}
	\end{bmatrix}.
\end{aligned}
	\end{align*}}
\end{defi}

With this election we get
\begin{pro}
For $\beta$'s as in \eqref{eq:discrete_support} we have
\begin{align*}
\begin{aligned}
	\prodint{P^{[1]}_n(x),\big(\beta\big)_x}&=\tilde{\mathcal J}_{P^{[1]}_n}\beta,&
\prodint{\mathcal  J^{[0,1]}_{K_{n-1}}(x),(\beta)_x}&=\tilde{\mathcal J}_{K_{n-1}} \beta,
\end{aligned}
\end{align*}
in terms of the spectral jet $\tilde {\mathcal J}$ relative to the  set $\{\tilde x_b\}_{b=1}^{\tilde q}$.
\end{pro}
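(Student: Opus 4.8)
The plan is to reduce both identities to a single elementary computation, that of the block pairing $\prodint{P(x),(\beta^{(j)}_m)_x}$ for an arbitrary matrix polynomial $P$, and then to recognise the resulting double sum as a block matrix product with $\beta$.

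First I would insert the discrete representation \eqref{eq:discrete_support} of $(\beta^{(j)}_m)_x$ into the pairing and use that the derivatives of the Dirac deltas act by $\big\langle \delta^{(l)}(x-\tilde x_b),P(x)\big\rangle=(-1)^l (P(x))^{(l)}_{\tilde x_b}$. The sign $(-1)^l$ cancels the prefactor $\tfrac{(-1)^l}{l!}$, leaving
\begin{align*}
\prodint{P(x),(\beta^{(j)}_m)_x}=\sum_{b=1}^{\tilde q}\sum_{l=0}^{\tilde\kappa^{(b)}-1}\frac{(P(x))^{(l)}_{\tilde x_b}}{l!}\,\beta^{(b,j)}_{l,m},
\end{align*}
with the coupling $\beta^{(b,j)}_{l,m}$ placed to the right of the jet value, exactly as it occurs in the expansion of $\prodint{P(x),Q(y)}_{u+v}$ displayed above. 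This is the only analytic input.

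Next I would read this formula off as matrix multiplication. For the first identity the block of $\prodint{P^{[1]}_n(x),(\beta)_x}$ labelled by $(j,m)$ equals the above sum with $P=P^{[1]}_n$; here $\tfrac{(P^{[1]}_n(x))^{(l)}_{\tilde x_b}}{l!}$ is the $(b,l)$-block of the spectral jet $\tilde{\mathcal J}_{P^{[1]}_n}$ relative to $\{\tilde x_b\}_{b=1}^{\tilde q}$, while $\beta^{(b,j)}_{l,m}$ is the $\big((b,l),(j,m)\big)$-block of $\beta$. Summing over $(b,l)$ is therefore exactly the $(j,m)$-block of $\tilde{\mathcal J}_{P^{[1]}_n}\beta$, which proves $\prodint{P^{[1]}_n(x),(\beta)_x}=\tilde{\mathcal J}_{P^{[1]}_n}\beta$. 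For the second identity I would apply the same elementary pairing to each block of the column $\mathcal J^{[0,1]}_{K_{n-1}}(x)$, namely to $x\mapsto \tfrac1{m!}(K_{n-1}(x,y))^{(0,m)}_{x,x_j}$. Since the $l$-th $x$-derivative at $\tilde x_b$ and the $m$-th $y$-derivative at $x_j$ act on different variables, they commute and assemble into the double jet $\tfrac1{l!m!}(K_{n-1}(x,y))^{(l,m)}_{\tilde x_b,x_j}$, which is precisely the $\big((j,m),(b,l)\big)$-block of $\tilde{\mathcal J}_{K_{n-1}}$. Contracting against $\beta^{(b,j')}_{l,m'}$ over $(b,l)$ gives the $\big((j,m),(j',m')\big)$-block of $\tilde{\mathcal J}_{K_{n-1}}\beta$, which is the claim.

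The computations are routine; the one point demanding care is the index bookkeeping. One must keep the two node sets with their multiplicities, $(\{\tilde x_b\},\tilde\kappa^{(b)})$ for the $x$-variable and $(\{x_j\},\kappa^{(j)})$ for the $y$-variable, strictly separate, check that the row- and column-block labellings of the jets $\tilde{\mathcal J}$ agree with those of $\beta$, and verify that the factorial normalisations $\tfrac1{l!\,m!}$ propagate correctly through the two differentiations. This indexing is the main, though modest, obstacle.
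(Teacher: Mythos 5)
Your computation is correct and is precisely the verification the paper leaves implicit: the paper states this proposition without proof (``With this election we get''), treating it as an immediate consequence of substituting \eqref{eq:discrete_support} into the pairing, which is exactly what you do. Your block-index bookkeeping — pairing $\delta^{(l)}(x-\tilde x_b)$ against the polynomial (respectively against each entry of $\mathcal J^{[0,1]}_{K_{n-1}}$, where the $x$- and $y$-derivatives commute), cancelling the signs, and recognising the double sum over $(b,l)$ as the block product with $\beta$ — matches the paper's displayed formula for $\prodint{P(x),Q(y)}_{u+v}$ and the definitions of $\tilde{\mathcal J}_{P^{[1]}_n}$, $\tilde{\mathcal J}_{K_{n-1}}$ and the coupling matrix $\beta$.
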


\begin{coro}\label{coro:Uvarov}
	Whenever   one of the two conditions $\tilde{\mathbb I}^{\perp_u^R}=\{0_p\}$ or $\mathbb I^{\perp_u^L}=\{0_p\}$ holds, the matrix
	$	I_{Np}+\tilde{\mathcal J}_{K_{n-1}} \beta$ is nonsingular and the perturbed matrix orthogonal polynomials and $H$ matrices have the following quasideterminantal expressions
	\begin{align*}
	\hat P^{[1]}_n(x)&=\Theta_*\begin{bmatrix}[small]
	I_{Np}+\beta\tilde{\mathcal J}_{K_{n-1}} &\beta\mathcal  J^{[0,1]}_{K_{n-1}}(x)\\
	\tilde{\mathcal J}_{P^{[1]}_n},& P_n^{[1]}(x)
	\end{bmatrix}, \\
	(\hat P^{[2]}_n(y))^\top&=\Theta_*\begin{bmatrix}[small]
	I_{Np}+\beta\tilde{\mathcal J}_{K_{n-1}} &\beta\big(\mathcal J_{ P^{[2]}_{n}}\big)^\top\\
	\tilde{\mathcal J}^{[1,0]}_{K_{n-1}}(y)	& (P_n^{[2]}(y ))^\top
	\end{bmatrix},\\
	\hat H_n&=\Theta_*\begin{bmatrix}
	I_{Np}+\beta\tilde{\mathcal J}_{K_{n-1}} &- \beta(\mathcal J_{P^{[2]}_n}  )^\top\\
	\tilde{\mathcal J}_{P^{[1]}_n}& H_n
	\end{bmatrix}.
	\end{align*}
\end{coro}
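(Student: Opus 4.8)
The plan is to read Corollary~\ref{coro:Uvarov} as the specialization of Theorem~\ref{teo:uvarov} to the doubly discrete perturbation \eqref{eq:discrete_support}, so that almost no fresh analysis is required: one substitutes the closed-form evaluations of the relevant pairings into the quasideterminantal formulas of the theorem, and then rearranges the result with a push-through identity. First I would collect the three evaluations in the first variable. From the preceding Proposition we already have $\prodint{P^{[1]}_n(x),(\beta)_x}=\tilde{\mathcal J}_{P^{[1]}_n}\beta$ and $\prodint{\mathcal J^{[0,1]}_{K_{n-1}}(x),(\beta)_x}=\tilde{\mathcal J}_{K_{n-1}}\beta$. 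The only ingredient not spelled out there, namely $\prodint{K_{n-1}(x,y),(\beta)_x}=\tilde{\mathcal J}^{[1,0]}_{K_{n-1}}(y)\beta$, follows by the identical computation applied to the first variable of the kernel: since each $(\beta^{(j)}_m)_x$ is the combination of derivatives of $\delta$ at the points $\{\tilde x_b\}$ prescribed by \eqref{eq:discrete_support}, pairing against $K_{n-1}(\cdot,y)$ produces exactly the first-variable spectral jet $\tilde{\mathcal J}^{[1,0]}_{K_{n-1}}(y)$ contracted with the coupling matrix $\beta$. The jets $(\mathcal J_{P^{[2]}_n})^\top$ and the matrix $H_n$ appearing in the theorem are left untouched.

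Substituting these identities into the three quasideterminants of Theorem~\ref{teo:uvarov} and expanding $\Theta_*$ yields, for instance,
\[
\hat P^{[1]}_n(x)=P^{[1]}_n(x)-\tilde{\mathcal J}_{P^{[1]}_n}\,\beta\big(I_{Np}+\tilde{\mathcal J}_{K_{n-1}}\beta\big)^{-1}\mathcal J^{[0,1]}_{K_{n-1}}(x),
\]
and analogous closed forms for $(\hat P^{[2]}_n(y))^\top$ and $\hat H_n$, all carrying the upper-left block $I_{Np}+\tilde{\mathcal J}_{K_{n-1}}\beta$ inherited from the theorem. To pass to the displayed form I would apply the push-through identity
\[
\beta\big(I_{Np}+\tilde{\mathcal J}_{K_{n-1}}\beta\big)^{-1}=\big(I_{\tilde Np}+\beta\tilde{\mathcal J}_{K_{n-1}}\big)^{-1}\beta,
\]
which is immediate from $\big(I_{\tilde Np}+\beta\tilde{\mathcal J}_{K_{n-1}}\big)\beta=\beta\big(I_{Np}+\tilde{\mathcal J}_{K_{n-1}}\beta\big)$. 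Because $\beta\in\mathbb C^{\tilde Np\times Np}$ is rectangular, this moves the factor $\beta$ out of the trailing position and into the off-diagonal blocks while shrinking the upper-left block from size $Np$ to size $\tilde Np$, which is precisely the difference between the theorem's and the corollary's quasideterminants. Doing this termwise reproduces the three stated formulas; I would also record that, by Sylvester's determinant identity, $\det\big(I_{Np}+\tilde{\mathcal J}_{K_{n-1}}\beta\big)=\det\big(I_{\tilde Np}+\beta\tilde{\mathcal J}_{K_{n-1}}\big)$, so that the two inverses exist simultaneously and the rearrangement is legitimate.

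It remains to match the hypotheses. The corollary assumes $\tilde{\mathbb I}^{\perp_u^R}=\{0_p\}$ or $\mathbb I^{\perp_u^L}=\{0_p\}$, whereas the theorem requires $\big(\operatorname{Ker}^R_\beta\big)^{\perp_u^R}=\{0_p\}$ or $\mathbb I^{\perp_u^L}=\{0_p\}$. The second conditions coincide, so nothing is needed there. For the first, I would argue as in the total-derivatives Proposition: since each $(\beta^{(j)}_m)_x$ is supported on $\{\tilde x_b\}_{b=1}^{\tilde q}$ with derivative orders below $\tilde\kappa^{(b)}$, every polynomial in $\tilde{\mathbb I}=(x-\tilde x_1)^{\tilde\kappa^{(1)}}\cdots(x-\tilde x_{\tilde q})^{\tilde\kappa^{(\tilde q)}}\mathbb C^{p\times p}[x]$ is annihilated by all of these functionals, whence $\tilde{\mathbb I}\subset\operatorname{Ker}^R_\beta$ and therefore $\big(\operatorname{Ker}^R_\beta\big)^{\perp_u^R}\subset\tilde{\mathbb I}^{\perp_u^R}=\{0_p\}$. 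Thus the hypothesis of Theorem~\ref{teo:uvarov} holds, its conclusion gives the nonsingularity of $I_{Np}+\tilde{\mathcal J}_{K_{n-1}}\beta$, and the corollary follows.

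I expect the delicate points to be bookkeeping rather than conceptual: establishing the first-variable jet identity $\prodint{K_{n-1}(x,y),(\beta)_x}=\tilde{\mathcal J}^{[1,0]}_{K_{n-1}}(y)\beta$ (the transpose-side analogue that the preceding Proposition does not state), and tracking the block-dimension change $Np\leftrightarrow\tilde Np$ through the push-through so that the rectangular shape of $\beta$ is handled correctly. In particular, the identity matrices appearing in the corollary's quasideterminants have size $\tilde Np$, matching the sizes of $\beta\tilde{\mathcal J}_{K_{n-1}}$, $\beta\mathcal J^{[0,1]}_{K_{n-1}}(x)$ and $\beta(\mathcal J_{P^{[2]}_n})^\top$.
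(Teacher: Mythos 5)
Your proposal is correct and takes essentially the route the paper intends: the corollary is Theorem \ref{teo:uvarov} specialized through the substitution identities of the preceding proposition, with the push-through identity $\beta\big(I_{Np}+\tilde{\mathcal J}_{K_{n-1}}\beta\big)^{-1}=\big(I_{\tilde Np}+\beta\tilde{\mathcal J}_{K_{n-1}}\big)^{-1}\beta$ and Sylvester's determinant identity handling the rectangular $\beta$ and the block-size change, and the inclusion $\tilde{\mathbb I}\subset\operatorname{Ker}^R_\beta$ transferring the hypothesis exactly as in the paper's total-derivatives case. Your two supplementary observations --- that the pairing $\prodint{K_{n-1}(x,y),(\beta)_x}=\tilde{\mathcal J}^{[1,0]}_{K_{n-1}}(y)\beta$ must be established separately (the proposition states only the other two identities), and that the identity blocks inside the corollary's quasideterminants must have size $\tilde Np$ rather than $Np$ --- correctly fill in details the paper leaves implicit.
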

%\begin{rem}[Discrete Sobolev sesquilinear forms]
%If the support lays by the diagonal $\tilde x_a=x_a$ and $\tilde \kappa^{(a)}=\kappa^{(a)}$ we have
%\begin{align*}
%\prodint{P(x),Q(y)}_{u+v}=\prodint{P(x),Q(y)}_u+
%\sum_{a,b=1}^q\sum_{m,l=0}^{\kappa^{(a)}-1}\frac{(-1)^{l+m}}{l!m!}\big(P(x)\big)^{(l)}_{ x_b}\beta^{(b,a)}_{l,m}\big((Q(y))^\top\big)^{(m)}_{x_a}.
%\end{align*}
%This a prototype of discrete Sobolev sesquilinear form. In particular, when $\beta^{(b,a)}_{m.l}=\delta_{a,b}\delta_{l,m}\beta^{(a)}_m$ and $\beta$ is a blok diagonal matrix we have
%\begin{align*}
%\prodint{P(x),Q(y)}_{u+v}=\prodint{P(x),Q(y)}_u+
%\sum_{a=1}^q\sum_{m=0}^{\kappa^{(a)}-1}\frac{1}{l!m!}\big(P(x)\big)^{(m)}_{ x_a}\beta^{(a)}_{m}\big((Q(y))^\top\big)^{(m)}_{x_a}.
%\end{align*}
% When $u$ is a Borel measure, we have the  discrete Sobolev sesquilinear form
%\begin{align*}
%\prodint{P(x),Q(y)}_{u+v}=\int P(x)\d\mu(x)(Q(x))^\top+
%\sum_{a=1}^q\sum_{m=0}^{\kappa^{(a)}-1}\frac{1}{l!m!}\big(P(x)\big)^{(m)}_{ x_a}\beta^{(a)}_{m}\big((Q(y))^\top\big)^{(m)}_{x_a},
%\end{align*}
%for which the Christoffel--Uvarov formulas in Corollary \ref{coro:Uvarov} provides biorthogonal families of matrix polynomials  as well as its matrix \emph{norms}.
%\end{rem}
%For more on discrete Sobolev orthogonality see \cite{Marcellan2014Sobolev}.

\end{document}